\newtheorem{remark}{Remark}
\renewcommand{\d}{\,\mathrm{d}}
\DeclareMathOperator{\sn}{sn}
\DeclareMathOperator{\sgn}{sgn}
\DeclareMathOperator{\ind}{ind}
\DeclareMathOperator{\K}{K}
\DeclareMathOperator{\spann}{span}
\DeclareMathAlphabet{\mathbf}{OT1}{cmr}{bx}{it}
\newcommand{\TSec}[2]{#1_{#2}}
\newcommand{\BSec}[2]{#1_{{#2}'}}
\newcommand{\sTSec}[2]{#1^*_{#2}}
\newcommand{\sBSec}[2]{#1^*_{{#2}'}}
\begin{document}

\title{Zolotarev quadrature rules and load balancing for the FEAST eigensolver}

\author{Stefan G\"{u}ttel\thanks{School of Mathematics, The University of Manchester, Alan Turing Building, Oxford Road, M13\, 9PL Manchester, 
United Kingdom, \texttt{stefan.guettel@manchester.ac.uk}}
\and
Eric Polizzi\thanks{Department of Electrical and Computer Engineering, 
University of Massachusetts, Amherst, USA, \texttt{polizzi@ecs.umass.edu}. This  author was partially 
supported by the National Science Foundation under Grant \#ECCS-0846457,
and also acknowledges travel support from the EPSRC Network Grant EP/I03112X/1.}
\and
Ping Tak Peter Tang\thanks{Intel Corporation, USA, \texttt{peter.tang@intel.com}}
\and
Gautier Viaud\thanks{Ecole Centrale Paris, France,  \texttt{gautier.viaud@ecp.fr}}}

\date{}

\maketitle

%


\begin{abstract}
The FEAST method for solving large sparse eigenproblems is equivalent to subspace iteration with an approximate spectral projector and implicit orthogonalization.
This relation allows to characterize the convergence of this method in terms of the error of a certain rational approximant to an indicator function.
We propose improved rational approximants leading to
FEAST variants with faster convergence, in particular,
when using rational approximants based on the work of Zolotarev.
Numerical experiments demonstrate the possible computational savings especially for pencils whose eigenvalues are not
 well separated and when the dimension of the search space is only slightly larger than the number of wanted eigenvalues.
The new approach improves both convergence robustness and
 load balancing when FEAST runs on multiple search intervals in parallel.
\end{abstract}

\begin{keywords}
generalized eigenproblem, FEAST, quadrature, Zolotarev, filter design, load balancing
\end{keywords}

\begin{AMS}
65F15, 
41A20, 
65Y05
\end{AMS}

\pagestyle{myheadings}
\thispagestyle{plain}
\markboth{S. G\"{U}TTEL, E. POLIZZI, P. TANG, AND G. VIAUD}{ZOLOTAREV QUADRATURE RULES AND LOAD BALANCING FOR FEAST}

\section{Introduction}

The FEAST method \cite{2009_polizzi} is an 
algorithm for computing a few eigenpairs $(\lambda,\mathbf x)$ 
of a large sparse generalized eigenproblem
\begin{equation}\label{eq:gep}
    A \mathbf x = \lambda B \mathbf x,
\end{equation}
where $A\in\mathbb{C}^{N\times N}$ is Hermitian and $B\in\mathbb{C}^{N\times N}$
is Hermitian positive definite. 
This method belongs to the class of contour-based eigensolvers which  have attracted much attention over the past decade. 
Contour-based methods utilize integrals of the form
\begin{equation}\label{eq:moments}
	C_j := \frac{1}{2\pi i} \int_\Gamma \gamma^j (\gamma B - A)^{-1}B \d \gamma = 
	 \frac{1}{2\pi i} \int_\Gamma \gamma^j (\gamma I - M)^{-1} \d \gamma,
\end{equation}
where $M = B^{-1}A$ and $\Gamma$ is a contour in the complex plane enclosing the wanted eigenvalues of $(A,B)$. Typically a quadrature rule is then applied to evaluate this contour integral numerically. 

Probably the first practical method which combined contour integrals and quadrature was presented by Delves and Lyness  
 \cite{1967_delves}, although this was for the (related) purpose of  finding roots of scalar analytic functions (see also \cite{austin2013} for an overview of various methods for this purpose). 
The method presented by Sakurai and Sugiura in \cite{2003_sakurai} (see also \cite{2008_ikegami,2008_sakurai}) makes use of the moments $\mu_j = \mathbf{u}^* C_j \mathbf{v}$
for solving \eqref{eq:gep}.
 This is done by constructing a matrix pencil of small size whose eigenvalues 
correspond to the targeted ones of the original system. The procedure terminates after the reduced system is constructed and its eigenvalues are  obtained. In this sense the method in \cite{2003_sakurai}, sometimes referred to as \emph{SS method}, is non-iterative in nature. The SS method based on explicit moments may become numerically unstable and the so-called \emph{CIRR method} \cite{sakurai2007cirr}  tries to address this problem by using explicit Rayleigh--Ritz projections for Hermitian eigenproblems. A block-version of CIRR applicable to non-Hermitian eigenproblems was presented in  \cite{ikegami2010contour}.

Expressed in terms of moments, FEAST uses only the zeroth moment matrix $C_0$, which
corresponds to the spectral projector onto the invariant subspace associated with the  eigenvalues 
 enclosed by $\Gamma$. Since this projector can  be computed only approximately, FEAST must be an iterative algorithm: it applies an approximate spectral projector 
repeatedly, progressively steering the search space into the direction of an invariant subspace containing the wanted eigenvectors. The original paper \cite{2009_polizzi} demonstrated the effectiveness
of the approach without analysis of convergence, 
which was then completed only very recently in \cite{Tang13}.

Consistent with \cite{Tang13},
we use the fact that the FEAST method is equivalent to subspace iteration with implicit orthogonalization 
applied with a rational matrix function $r_m(M)$.
In the original FEAST derivation \cite{2009_polizzi}, the rational function $r_m(z)$
was obtained via quadrature approximation of an indicator function $f(z)$ represented as 
\begin{equation}\label{eq:cauchy}
    f(z) = \frac{1}{2\pi i} \int_\Gamma \frac{\d \gamma}{\gamma - z},
\end{equation}
where $\Gamma$ is a contour enclosing the wanted eigenvalues of $M$.
We will show that the convergence of FEAST is governed by the separation of the wanted and unwanted eigenvalues of $r_m(M)$, and that this separation is determined by the accuracy of the quadrature approximation $r_m$ for $f$. We then use this argument to motivate our new choice of $r_m$, which is not based on contour integration but on a  rational approximant constructed by Zolotarev.

Zolotarev's rational functions are ubiquitous in the design of electronic filters (see, e.g., \cite{blinchikoff1976filtering,van1982analog})  and in this context often referred to as \emph{elliptic filters} or \emph{Cauer filters}. 
Examples from numerical analysis where these functions have proven useful are 
the choice of optimal parameters in the ADI method \cite{Wac88},
 the construction of optimal finite-difference grids \cite{ingerman2000optimal},
in parameter selection problems with rational Krylov methods for matrix functions \cite{guttel2013rational}, or for the optimization of time steps in the Crank--Nicolson method \cite{ML05}, see also \cite{Tod84}.
The use of Zolotarev rational functions (or equivalently, elliptic filters) in the context of FEAST is very natural but
does not seem to have been considered before in the literature, with the exception
of the master thesis  \cite{Via12}.

The outline of this paper is as follows. In Section~\ref{sec:feast}
we briefly review the FEAST method and its connection with
subspace iteration.
In Section~\ref{sec:quad} we compare two different quadrature approaches
that are commonly used, namely the approach 
 based on mapped Gauss quadrature as proposed by Polizzi \cite{2009_polizzi},
and another one based on the trapezoid rule which is close in spirit to that of 
Sakurai and coauthors (see, e.g., \cite{2003_sakurai,ikegami2010contour}).
We also derive a relation between the rational functions obtained from the trapezoid rule on ellipsoidal contours and 
so-called \emph{type-1 Chebyshev filters}. 
While the trapezoid rule seems most natural, Gauss quadrature
turns out to be advantageous if the wanted and unwanted eigenvalues of $M$ are not well separated.
In Section~\ref{sec:zolo} we derive an improved quadrature rule based
on the optimal Zolotarev approximation to the sign function, and compare it in Section~\ref{sec:comp}
with the previous quadrature rules. 
In Section~\ref{sec:load} we discuss the implications of the Zolotarev approach on the load balancing problem which arises when  FEAST runs on multiple search intervals synchronously. 
 We end with Section~\ref{sec:numex} which demonstrates the improvements with numerical experiments.


%
%
%
%
%
%

\section{The FEAST method}\label{sec:feast}

In this section we will explain how the FEAST method is
mathematically equivalent to subspace iteration applied 
with a rational matrix function $r_m(M)$, where $M=B^{-1}A$.
Let  $M = X \Lambda X^{-1}$  be an eigendecomposition of $M$, where
$\Lambda \in \mathbb{R}^{N\times N}$ is a diagonal matrix
whose real diagonal entries are the eigenvalues of $M$ and
the columns of $X \in \mathbb{C}^{N\times N}$ correspond
to the eigenvectors, chosen to be $B$-orthonormal, i.e.,
$X^* B X = I$.
Here is a step-by-step listing of the FEAST method:

\medskip

\begin{enumerate}
 \item Choose $n<N$ random columns of
 $Y_0 := [ \mathbf{y}_1,\ldots,\mathbf{y}_n ]\in\mathbb{C}^{N\times n}$.
 \item Set $k:=1$.
 \item Compute $Z_k := r_m(M) Y_{k-1} \in\mathbb{C}^{N\times n}$.
 \item Compute $\widehat A_k := Z_k^* A Z_k$ and $\widehat B_k := Z_k^* B Z_k$.
 \item Compute a $\widehat B_k$-orthonormal matrix $W_k\in \mathbb{C}^{n\times n}$ and the 
diagonal matrix $D_k=\diag(\vartheta_1,\ldots,\vartheta_n)$ such that
  $\widehat A_k W_k = \widehat B_k W_k D_k.$
 \item Set $Y_k := Z_k W_k$.
 \item If $Y_k$ has not converged, set $k:=k+1$ and goto Step~3.
\end{enumerate}

\medskip

For the rational matrix function $r_m(M)$ in Step~3 to be well-defined we assume
here and in the following that none of the poles of $r_m$ coincides with an eigenvalue of~$M$. 
When $r_m$ has a partial
fraction expansion
\[
	r_m(z) = \sum_{j=1}^{2m} \frac{w_j}{z_j - z},
\]
then Step~3 amounts to the solution of $2m$ decoupled linear systems which
can be solved in parallel (with an appropriate choice of $r_m$ only $m$ linear systems need to be solved
in some cases, see Section~\ref{sec:quad}):
\[
Z_k = r_m(M) Y_{k-1} = \sum_{j=1}^{2m} w_j (z_j B - A)^{-1}(B Y_{k-1}).
\]
In the original formulation of FEAST in \cite{2009_polizzi} the $B$-factor in
$(BY_{k-1})$ is not applied in
Step~3 but at the end of each loop. This  makes a difference only in the
first iteration.

Note that the columns of $Y_k$ for $k\geq 1$ are
$B$-orthogonal because the eigenvector matrix $W_k$ of the reduced pencil
$(\widehat A_k,\widehat B_k)$ computed in Step~5 is $\widehat B_k$-orthonormal and
\[
 Y_k^* B Y_k = W_k^* Z_k^* B Z_k W_k = W_k^* \widehat B_k W_k = I_n.
\]
The orthogonalization procedure is therefore implicitly built into the
Rayleigh--Ritz extraction procedure. FEAST can hence be viewed and analyzed as a subspace iteration with implicit orthogonalization run
with the matrix $r_m(M)$; see, e.g., \cite[\S5.2]{Saa92a}.

The following results are adopted from \cite{Via12} and \cite{Tang13}. We include them for completeness and to motivate our derivations in the following sections.
Let the eigenpairs $(\lambda_j,\mathbf{x}_j)$ of $M$ be ordered such that
\begin{equation}\label{eq:order}
 |r_m(\lambda_1)| \geq |r_m(\lambda_2)| \geq \cdots
 \geq |r_m(\lambda_N)|.
\end{equation}
We introduce the following notations. For any integer $n$, $1 \le n < N$:
\[
\begin{array}{l l l l l}
\TSec{X}{n} & = & [\mathbf{x}_1, \mathbf{x}_2, \ldots, \mathbf{x}_n]         &
                  \in & \mathbb{C}^{N\times n}, \\
\BSec{X}{n} & = & [\mathbf{x}_{n+1}, \mathbf{x}_{n+2}, \ldots, \mathbf{x}_N] &
                  \in & \mathbb{C}^{N\times (N-n)}, \\
\TSec{\Lambda}{n} & = & \diag(\lambda_1, \lambda_2, \ldots, \lambda_n) &
                             \in & \mathbb{R}^{n \times n}, \\
\BSec{\Lambda}{n} & = & \diag(\lambda_{n+1}, \lambda_{n+2}, \ldots, \lambda_N) &
                             \in & \mathbb{R}^{(N-n) \times (N-n)}.
\end{array}
\]
With these notations, the matrix $\TSec{X}{n} \sTSec{X}{n} B$ corresponds to the $B$-orthogonal projector
onto $\spann(\{\mathbf x_1, \mathbf  x_2, \ldots, \mathbf x_n\})$, and likewise
$\BSec{X}{n}\sBSec{X}{n}B$ is the $B$-orthogonal projector onto
$\spann(\{\mathbf  x_{n+1}, \mathbf  x_{n+2}, \ldots, \mathbf  x_N\})$, for any $1 \le n < N$.
In particular, since $X^* B X = I$ implies
$X^{-1} = X^* B$, the eigendecomposition of $M$ and $r_m(M)$ can be written as
\[
M = \TSec{X}{n} \TSec{\Lambda}{n} \sTSec{X}{n} B +
    \BSec{X}{n} \BSec{\Lambda}{n} \sBSec{X}{n} B,
\]
and
\[
r_m(M) = \TSec{X}{n} \, r_m(\Lambda_n) \, \sTSec{X}{n} B +
    \BSec{X}{n}  \,r_m(\Lambda_{n'})\, \sBSec{X}{n} B,
\]
for any $1 \le n < N$.
Note also that
$\TSec{X}{n}\sTSec{X}{n}B + \BSec{X}{n}\sBSec{X}{n}B = I$. The following lemma provides a characterization of $\spann(Z_k)$.

\begin{lemma}
\label{lemma:span}
Consider the FEAST method as described in Steps 1--7 previously. Suppose
$|r_m(\lambda_n)| > 0$, and that
$Y_0$ in Step 1 is such that the $n \times n$ matrix
$\sTSec{X}{n} B Y_0$ is invertible.
Then the matrices $Z_k$ of Step~3 always maintain full column rank $n$ and
\[
\spann(Z_k) = \spann(r_m^k(M) Y_0)
\]
for all iterations $k\geq 1$.
\end{lemma}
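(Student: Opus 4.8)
The plan is to argue by induction on $k$, carrying two invariants simultaneously: that $Z_k$ has full column rank $n$, and that $\spann(Z_k)=\spann(r_m^k(M)Y_0)$. Carrying both together is essential, because the full rank of $Z_k$ is exactly what makes Step~5 well defined at iteration $k$ (it guarantees that $\widehat B_k=Z_k^*BZ_k$ is Hermitian positive definite), and the resulting invertibility of $W_k$ is what propagates the span to the next step. The main computational tool throughout is the eigendecomposition of $r_m(M)$ displayed above, together with the identity $X^{-1}=X^*B$.

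First I would establish a rank fact independent of the iteration: $r_m^k(M)Y_0$ has full column rank $n$ for every $k\ge 1$. Multiplying by the nonsingular matrix $X^*B$ and using $r_m^k(M)=X\,r_m(\Lambda)^k\,X^*B$ gives
\[
X^*B\,r_m^k(M)Y_0 = r_m(\Lambda)^k\,X^*BY_0
= \begin{pmatrix} r_m(\Lambda_n)^k\,\sTSec{X}{n}BY_0 \\[2pt] r_m(\Lambda_{n'})^k\,\sBSec{X}{n}BY_0 \end{pmatrix}.
\]
The top $n\times n$ block is the product of $r_m(\Lambda_n)^k$, which is nonsingular because the ordering \eqref{eq:order} together with the hypothesis $|r_m(\lambda_n)|>0$ forces $r_m(\lambda_1),\ldots,r_m(\lambda_n)$ all to be nonzero, and of $\sTSec{X}{n}BY_0$, which is invertible by assumption. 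Hence this block is invertible, so $X^*B\,r_m^k(M)Y_0$ has rank $n$, and since $X^*B$ is nonsingular, $r_m^k(M)Y_0$ has full column rank $n$.

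With this in hand the induction itself is short. For $k=1$ we have $Z_1=r_m(M)Y_0=r_m^1(M)Y_0$, so the span identity is immediate and full rank follows from the rank fact. For the inductive step I would assume $Z_k$ has full column rank and $\spann(Z_k)=\spann(r_m^k(M)Y_0)$. Full rank makes $\widehat B_k$ positive definite, so a $\widehat B_k$-orthonormal $W_k$ exists; since $W_k^*\widehat B_k W_k=I_n$ forces $W_k$ to be invertible, $Y_k=Z_kW_k$ satisfies $\spann(Y_k)=\spann(Z_k)$. Then $Z_{k+1}=r_m(M)Y_k$, and applying the linear map $r_m(M)$ to the equal subspaces yields $\spann(Z_{k+1})=r_m(M)\big[\spann(Y_k)\big]=r_m(M)\big[\spann(r_m^k(M)Y_0)\big]=\spann(r_m^{k+1}(M)Y_0)$. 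Full column rank of $Z_{k+1}$ then follows because its span equals $\spann(r_m^{k+1}(M)Y_0)$, which has dimension $n$ by the rank fact.

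The step I expect to need the most care is not any single calculation but the bookkeeping that keeps the argument honest when $r_m(M)$ is singular: because $r_m(M)$ need not be invertible, I cannot simply ``multiply by an invertible matrix'' and must instead reason with images of subspaces, using $\spann(r_m(M)V)=r_m(M)\big[\spann(V)\big]$, and separately certify that no rank is lost. The rank fact isolates precisely this point, reducing it to the invertibility of the leading block $r_m(\Lambda_n)^k\,\sTSec{X}{n}BY_0$, which is where the two hypotheses $|r_m(\lambda_n)|>0$ and $\sTSec{X}{n}BY_0$ invertible do all the work.
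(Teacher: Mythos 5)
Your proof is correct and takes essentially the same route as the paper: both rest on the block eigendecomposition of $r_m(M)$, the observation that $r_m(\Lambda_n)$ times the invertible matrix $\sTSec{X}{n}BY_0$ is invertible, and the fact that full column rank of $Z_k$ makes $\widehat B_k$ positive definite and hence $W_k$ invertible. The only difference is bookkeeping: the paper propagates invertibility of $\sTSec{X}{n}BY_k$ through the induction and then reads off the span identity from the explicit factorization $Z_k = r_m^k(M)\,Y_0\,W_1\cdots W_{k-1}$, whereas you establish full column rank of $r_m^k(M)Y_0$ for all $k$ in one stroke and carry the span identity itself as the induction invariant via images of subspaces.
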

\begin{proof}
We will first use an induction argument to show that the matrices $Z_k$
have full column rank and that the matrices $W_k$ are invertible. Suppose
$\sTSec{X}{n} B Y_{k-1}$ is invertible for some $k\geq 1$. Then the
$n\times n$ matrix
\begin{eqnarray*}
\sTSec{X}{n}B Z_k  &=&
\sTSec{X}{n}B\left[ \TSec{X}{n} \, r_m(\Lambda_n) \, \sTSec{X}{n} B +
                    \BSec{X}{n} \, r_m(\Lambda_{n'}) \, \sBSec{X}{n} B
             \right] \, Y_{k-1} \\
 &=&  r_m(\Lambda_n)  \,\sTSec{X}{n} B Y_{k-1}
\end{eqnarray*}
is invertible because
$r_m(\Lambda_n) = \diag(r_m(\lambda_1),\ldots,r_m(\lambda_n))$ is
invertible by the assumption $|r_m(\lambda_n)| > 0$.
In particular $Z_k$ has full column rank. This means that
$\widehat B_k = Z^*_k B Z_k$ is positive definite, resulting in an invertible matrix
$W_k$. Hence $\sTSec{X}{n} B Y_k = (\sTSec{X}{n} B Z_k) W_k$ is also invertible.
By assumption, $\sTSec{X}{n}B Y_0$ is invertible and hence  by induction
$Z_k$ has full column rank and $W_k$ is invertible for $k\geq 1$.

Finally, it is easy to see that $Z_1 = r_m(M) Y_0$, and that
\[
Z_k = r_m^k(M) \, Y_0 \, W_1 \, W_2 \, \cdots \, W_{k-1} \quad
\hbox{for $k\ge 2$}.
\]
Consequently, $\spann(Z_k) = \spann(r_m^k(M)Y_0)$ for $k\geq 1$ as claimed.
\end{proof}

\smallskip

The following theorem is a straightforward adaptation of \cite[Thm.~5.2]{Saa92a} (see \cite{bathe77} for the original result).
Here the $B$-norm of a vector $\mathbf w \in \mathbb{C}^N$ is defined in the usual way as $\| \mathbf w \|_B = (\mathbf w^* B \mathbf w)^{1/2}$.

\begin{theorem}
\label{thm:subspace-iteration}
Consider the FEAST method as described in Steps 1--7 previously. Suppose that 
$|r_m(\lambda_n)| > 0$ and 
$Y_0$ in Step~1 is such that the $n \times n$ matrix $\sTSec{X}{n} B Y_0$ is invertible.
Let $P_k$ be the $B$-orthogonal projector onto the subspace $\spann(Z_k)$. Then for
each $j=1,2,\ldots,n$ there is a constant $\alpha_j$ such that
\[
\|(I-P_k)\mathbf{x}_j\|_B \leq \alpha_j \left| \frac{r_m(\lambda_{n+1})}{r_m(\lambda_j)} \right|^k
\]
for iterations $k\geq 1$, where
$(\lambda_j,\mathbf{x}_j)$ is the $j$-th eigenpair
of $M$ with the ordering \eqref{eq:order}.
In particular, $\|(I-P_k)\mathbf{x}_j\|_B \rightarrow 0$ as long as
$|r_m(\lambda_j)| > |r_m(\lambda_{n+1})|$.
\end{theorem}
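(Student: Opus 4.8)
The plan is to reduce the convergence estimate to a standard subspace-iteration bound applied to the matrix $r_m(M)$, using Lemma~\ref{lemma:span} to identify $\spann(Z_k)$ with $\spann(r_m^k(M)Y_0)$ and the spectral ordering \eqref{eq:order} to play the role of the usual eigenvalue ordering. First I would observe that $r_m(M)$ is a $B$-normal matrix: since $M=X\Lambda X^{-1}$ with $X^*BX=I$, the function $r_m(M)$ has the same $B$-orthonormal eigenbasis $\{\mathbf x_j\}$ with eigenvalues $r_m(\lambda_j)$, ordered by \eqref{eq:order} so that the ``dominant'' eigenvalues of $r_m(M)$ are exactly $r_m(\lambda_1),\ldots,r_m(\lambda_n)$. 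This recasts the situation as ordinary subspace iteration with $r_m(M)$ in the $B$-inner product, where the wanted invariant subspace is $\spann(\TSec{X}{n})$ and the convergence factor for the $j$-th vector is governed by the gap ratio $|r_m(\lambda_{n+1})/r_m(\lambda_j)|$.

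Next I would make the change of inner product explicit. Because $X^*BX=I$, the map $\mathbf w\mapsto X^{-1}\mathbf w = X^*B\mathbf w$ is an isometry from $(\mathbb C^N,\|\cdot\|_B)$ to $(\mathbb C^N,\|\cdot\|_2)$ that diagonalizes $r_m(M)$. Under this transformation, the $B$-orthogonal projector $P_k$ onto $\spann(Z_k)=\spann(r_m^k(M)Y_0)$ becomes an ordinary orthogonal projector onto the span of the transformed iterates, and $r_m(M)$ becomes the diagonal matrix $\diag(r_m(\lambda_1),\ldots,r_m(\lambda_N))$. The quantity $\|(I-P_k)\mathbf x_j\|_B$ is preserved as the corresponding $2$-norm of the transformed quantity, so it suffices to prove the bound in the standard Euclidean setting for a diagonal matrix with decreasingly ordered moduli on the diagonal.

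I would then invoke the classical subspace-iteration estimate, \cite[Thm.~5.2]{Saa92a}, which is precisely stated for this diagonalizable setting: provided the starting block has a nondegenerate projection onto the dominant invariant subspace (guaranteed here by the hypothesis that $\sTSec{X}{n}BY_0$ is invertible, which after transformation is exactly the condition that the leading $n\times n$ block of the transformed $Y_0$ is nonsingular), each dominant eigenvector $\mathbf x_j$ satisfies
\[
\|(I-P_k)\mathbf x_j\|_B \le \alpha_j \left|\frac{r_m(\lambda_{n+1})}{r_m(\lambda_j)}\right|^k
\]
for a constant $\alpha_j$ depending only on the starting subspace and the eigenvector conditioning, but not on $k$. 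The hypothesis $|r_m(\lambda_n)|>0$ ensures that the leading moduli are strictly positive, so the ratios are well defined and, when $|r_m(\lambda_j)|>|r_m(\lambda_{n+1})|$, strictly less than one, giving convergence to zero.

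The main obstacle, and the only place requiring genuine care rather than citation, is verifying that the nondegeneracy hypothesis $\sTSec{X}{n}BY_0$ invertible translates correctly into the hypothesis required by \cite[Thm.~5.2]{Saa92a} after the $B$-to-$2$ norm transformation, and tracking that $P_k$ as the $B$-orthogonal projector corresponds exactly to the orthogonal projector in the transformed coordinates. Since $\alpha_j$ is permitted to depend on the problem data and on $j$ but must be independent of $k$, I would not attempt to make $\alpha_j$ explicit; the entire force of the statement is the geometric decay rate, and that is supplied verbatim by the cited theorem once the $B$-orthonormality of $X$ has been used to put everything in diagonal form.
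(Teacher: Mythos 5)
Your reduction is correct, but it packages the argument differently from the paper, which does not cite \cite[Thm.~5.2]{Saa92a} as a black box but re-derives the bound from scratch in the $B$-inner product: using $I = \TSec{X}{n}\sTSec{X}{n}B + \BSec{X}{n}\sBSec{X}{n}B$, the paper exhibits for each $j\le n$ the explicit vector $\mathbf{x}_j + \BSec{X}{n}\mathbf{w}_j \in \spann(Y_0)$, where $\mathbf{w}_j$ is the $j$-th column of $W=(\sBSec{X}{n}BY_0)(\sTSec{X}{n}BY_0)^{-1}$, pushes it through $r_m^k(M)$ via Lemma~\ref{lemma:span}, and reads off the estimate with the explicit constant $\alpha_j=\|\mathbf{w}_j\|_2$. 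Your route instead makes the change of coordinates $\mathbf{w}\mapsto X^{-1}\mathbf{w}=X^*B\mathbf{w}$ (an isometry since $X^*BX=I$ gives $B=X^{-*}X^{-1}$), under which $r_m(M)$ becomes $r_m(\Lambda)$, the $B$-orthogonal projector $P_k$ becomes the Euclidean orthogonal projector onto $\spann\bigl(r_m(\Lambda)^k X^*BY_0\bigr)$, and the hypothesis that $\sTSec{X}{n}BY_0$ is invertible becomes exactly the classical nondegeneracy condition; all of these correspondences check out, and the reduction buys modularity and makes transparent that FEAST is literally standard subspace iteration in disguise. The one point where your write-up oversells is the claim that the bound is then ``supplied verbatim'' by the cited theorem: as stated for general (possibly non-normal) matrices, that theorem yields a factor of the form $\bigl(|r_m(\lambda_{n+1})/r_m(\lambda_j)|+\epsilon_k\bigr)^k$ with $\epsilon_k\to 0$ (and carries a strict dominance gap in its standing assumptions), not the clean power $|r_m(\lambda_{n+1})/r_m(\lambda_j)|^k$; the clean rate holds here precisely because your transformed matrix is diagonal, so the powers of the unwanted block have $2$-norm exactly $|r_m(\lambda_{n+1})|^k$. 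You therefore need either the normal-matrix refinement of the textbook theorem or the short direct computation in diagonal coordinates --- which is essentially what the paper's self-contained proof writes out, and which also yields the explicit $\alpha_j$ and avoids any gap assumption beyond $|r_m(\lambda_n)|>0$. This is a presentational gap rather than a mathematical one, since everything needed to close it is already established in your setup.
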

\begin{proof}
As observed previously,
$I = \TSec{X}{n}\sTSec{X}{n}B +
     \BSec{X}{n}\sBSec{X}{n}B$. Therefore
\begin{eqnarray*}
Y_0  & = & (\TSec{X}{n}\sTSec{X}{n}B + \BSec{X}{n}\sBSec{X}{n}B) \,Y_0  \\
     & = & (\TSec{X}{n} + \BSec{X}{n} (\sBSec{X}{n} B Y_0)
                                      (\sTSec{X}{n} B Y_0)^{-1})\,
           \sTSec{X}{n} B Y_0.
\end{eqnarray*}
Hence $\spann(Y_0) = \spann( \TSec{X}{n} + \BSec{X}{n} W )$,
where $W$ is the $(N-n)\times n$ matrix
\[
W = (\sBSec{X}{n} B Y_0) (\sTSec{X}{n} B Y_0)^{-1}.
\]
Writing $W$ as $[\mathbf{w}_1, \mathbf{w}_2, \ldots, \mathbf{w}_n]$, 
the vector $\mathbf{x}_j + \BSec{X}{n}\,\mathbf{w}_j$ is an element of
$\spann(Y_0)$. Define the constant
$\alpha_j$ as $\|\mathbf{w}_j\|_2$. By Lemma~\ref{lemma:span},
$\spann(Z_k) = \spann(r_m^k(M)Y_0)$ and thus
$
r_m^k(M)(\mathbf{x}_j + \BSec{X}{n}  \mathbf{w}_j) \in
\spann(Z_k).
$
But $r_m^k(M) = X \, r_m^k(\Lambda) \, X^{-1}$ and thus
\[
r_m^k(M)(\mathbf{x}_j + \BSec{X}{n}  \mathbf{w}_j)
= r_m^k(\lambda_j)\, \mathbf{x}_j +
  \BSec{X}{n} \, r_m^k\left(\BSec{\Lambda}{n}\right) \mathbf{w}_j.
\]
Therefore, the vector $\mathbf{x}_j + \BSec{X}{n} \widetilde{\mathbf{w}}_j$ is an
element of $\spann(Z_k)$,
where
\[
\widetilde{\mathbf{w}}_j =
\diag\left( \frac{ r_m^k(\lambda_{n+1})}{r_m^k(\lambda_j)},
       \frac{ r_m^k(\lambda_{n+2})}{r_m^k(\lambda_j)}, \ldots,
       \frac{ r_m^k(\lambda_N)}{r_m^k(\lambda_j)} \right)
 \,\mathbf{w}_j.
\]
Hence
$
\| \widetilde{\mathbf{w}}_j \|_2 \le
\alpha_j \, \left| r_m(\lambda_{n+1}) / r_m(\lambda_j) \right|^k
$.
Therefore, inside $\spann(Z_k)$ lies a vector
$\mathbf{x}_j + \mathbf{e}_j$ with
$
\| \mathbf{e}_j \|_B = \| \widetilde{\mathbf{w}}_j \|_2
\le
\alpha_j \, \left| r_m(\lambda_{n+1}) / r_m(\lambda_j) \right|^k .
$
Finally,
\begin{eqnarray*}
\| (I-P_k) \mathbf{x}_j \|_B
 & = &
 \min_{\mathbf{z} \in \spann(Z_k)}
 \| \mathbf{x}_j - \mathbf{z} \|_B \\
 & \le &
 \| \mathbf{e}_j \|_B \\
 & \le &
\alpha_j \, \left| \frac{r_m(\lambda_{n+1}) }{ r_m(\lambda_j) } \right|^k ,
\end{eqnarray*}
which completes the proof.
\end{proof}

We learn from Theorem~\ref{thm:subspace-iteration}  that 
fast convergence can be achieved for a wanted eigenpair 
$(\lambda_j,\mathbf x_j)$   if the 
ratio $|r_m(\lambda_{n+1})/r_m(\lambda_j)|$ is small ($j\leq n$). 
This is an  approximation problem which 
we will investigate closer in the following sections.

\section{Two simple quadrature rules}\label{sec:quad}

We assume without loss of generality that the pencil $(A,B)$ has been transformed linearly to $(\alpha A - \beta B, B)$
such that the wanted  eigenvalues are contained in the interval $(-1,1)$.
For a given scaling parameter $S>1$ we  define a family of ellipses $\Gamma_S$ as 
\begin{equation}\label{eq:Gamma}
    \Gamma_S = \left\{ \gamma : \gamma = \gamma(\theta) = \frac{S e^{i\theta} + S^{-1}e^{-i\theta}}{S + S^{-1}}, \ \theta\in [0,2\pi) \right\}.
\end{equation}
Note that $\gamma(\theta) = \cos(\theta) + i \frac{S-S^{-1}}{S+S^{-1}}\sin(\theta)$, 
hence these ellipses enclose the interval $(-1,1)$ and pass through the interval endpoints $\pm 1$.
As $S\to\infty$, the ellipses approach the unit circle.
After a straightforward change of variables one can evaluate the integral
\eqref{eq:cauchy} with contour $\Gamma=\Gamma_S$ via integration over $[0,2\pi]$ as
\begin{eqnarray}\label{eq:fz}
    f(z) &=\,& \frac{1}{2\pi i} \int_0^{2\pi} \frac{\gamma'(\theta)}{\gamma(\theta) - z}\d \theta
    =:   \int_0^{2\pi} g_z(\theta) \d \theta,
\end{eqnarray}
where
\[
	g_z(\theta) := \frac{1}{2\pi}   \frac{ (S e^{i\theta} - S^{-1}e^{-i\theta})/(S + S^{-1}) }
    {(S e^{i\theta} + S^{-1}e^{-i\theta})/(S + S^{-1}) - z}.
\]
Two different approaches for the numerical approximation of the integral \eqref{eq:fz} 
 have been considered in the context of contour-based eigensolvers.

\subsection{Gauss quadrature}

It was proposed in \cite{2009_polizzi} to use $m$ Gauss quadrature nodes $\theta_j^{(G)}$ ($j=1,\ldots,m$) on the interval $[0,\pi]$,  and another set of $m$ Gauss quadrature nodes $\theta_j^{(G)}$ ($j=m+1,\ldots,2m$) on the interval $[\pi,2\pi]$. Denoting the corresponding Gauss weights by $\omega_j^{(G)}$, we have for \eqref{eq:fz} the quadrature approximation
 \[
    f(z) \approx  \sum_{j=1}^{2m} \omega_j^{(G)} g_z(\theta_j^{(G)})
    =:  r_m^{(G)}(z),
 \]
with a rational function $r_m^{(G)}$.
Defining the mapped Gauss nodes and weights
\[
     z_j^{(G)} = \frac{S e^{i\theta_j^{(G)}} + S^{-1}e^{-i\theta_j^{(G)}}}{S + S^{-1}},
    \quad w_j^{(G)} = \frac{\omega_j}{2\pi}  \frac{S e^{i \theta_j^{(G)}} - S^{-1}e^{-i \theta_j^{(G)}}}{S + S^{-1}}, \quad j=1,\ldots,2m,
\]
the function $r_m^{(G)}$ can be written in the form
\begin{equation}\label{eq:gaussrule}
	r_m^{(G)}(z) = \sum_{j=1}^{2m} \frac{w_j^{(G)}}{z_j^{(G)} - z}.
\end{equation}
This is a rational function of type $(2m-1,2m)$. By construction, its $2m$ poles have a four-fold symmetry about the origin,
\[
    z_j^{(G)} = -\overline{z_{m+1-j}^{(G)}}
    = - z_{m+j}^{(G)} = \overline{z_{2m+1-j}^{(G)}} \quad \text{for\ } j=1,\ldots,m,
\]
in particular, the poles occur in complex conjugate pairs.
If $A$ and $B$ are real symmetric this can be computationally convenient because for a real vector~$\mathbf v$ one has
$$
\overline{(z B - A)^{-1} \mathbf v} = (\overline{z} B - A)^{-1}\mathbf v,
$$
and hence the number of linear systems to be solved for computing $r_m^{(G)}(B^{-1}A)\mathbf v$ is only $m$ instead of $2m$.

A graphical illustration of $r_m^{(G)}$ is given in Figure~\ref{fig:gauss}. When $S$ decreases this rational function  becomes quite ``wiggly'' on the interval $(-1,1)$, with the oscillations being caused by the nearby poles and hence becoming larger as the ellipse gets flatter. 



\begin{figure}
  \centering\includegraphics[width=.7\linewidth]{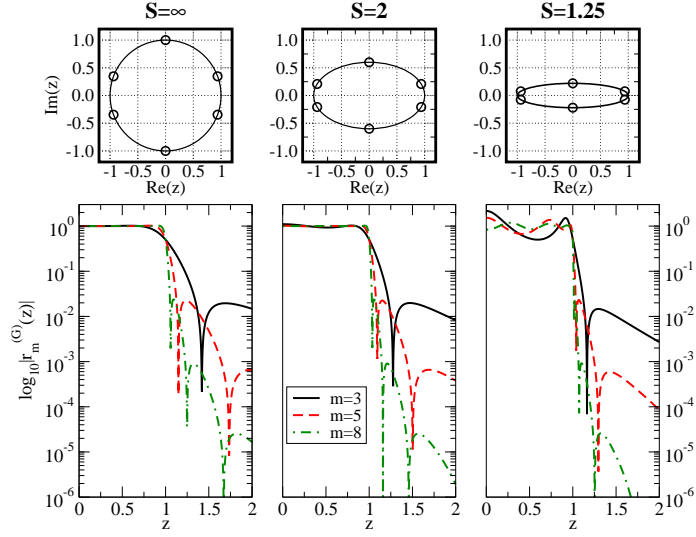}\\
  \caption{
The contours $\Gamma_S$ along with the associated rational functions $r_m^{(G)}$ obtained from the mapped Gauss quadrature rule for three different values of $S\in\{\infty,2,1.25\}$. The modulus of $r_m^{(G)}$ is plotted for each value of $S$ and for three different values of $m\in\{ 3,5,8\}$ over the interval $[0,2]$ (it is a symmetric function).  For clarity, the poles of $r_m^{(G)}$ are shown for the case $m=3$ only.}
\label{fig:gauss}
\end{figure}

\subsection{Trapezoid rule}

As the integrand $g_z$ in \eqref{eq:fz} is a $2\pi$-periodic function,
it appears most natural to use the trapezoid rule for its integration over $[0,2\pi]$. Indeed this is the preferred choice of quadrature rule in the moment-based methods (see, e.g., \cite{2003_sakurai,ikegami2010contour}). We refer to  \cite{trefethen2013exponentially} for a review of the trapezoid rule and its properties.

Let us take equispaced quadrature nodes $\theta_j^{(T)} = \pi (j-1/2)/m$ and equal weights
$\omega_j^{(T)} = \pi/m$, $j=1,\ldots,2m$, and use for \eqref{eq:fz} the trapezoid  approximation
\[
    f(z) \approx  \sum_{j=1}^{2m} \omega_j^{(T)} g_z(\theta_j^{(T)})
    =:  r_m^{(T)}(z),
\]
with a rational function $r_m^{(T)}$ of type at most $(2m-1,2m)$.
Defining the mapped trapezoid nodes and weights
\[
     z_j^{(T)} = \frac{S e^{i\theta_j^{(T)}} + S^{-1}e^{-i\theta_j^{(T)}}}{S + S^{-1}},
    \quad w_j^{(T)} = \frac{1}{2m}  \frac{S e^{i \theta_j^{(T)}} - S^{-1}e^{-i \theta_j^{(T)}}}{S + S^{-1}}, \quad j=1,\ldots,2m,
\]
the rational function $r_m^{(T)}$ can be written in the form
\begin{equation}\label{eq:traprule}
	r_m^{(T)}(z) = \sum_{j=1}^{2m} \frac{w_j^{(T)}}{z_j^{(T)} - z}.
\end{equation}
We now show that $r_m^{(T)}$ has a close connection with Chebyshev polynomials. 

\begin{lemma}\label{lem:trapez}
The rational function $r_m^{(T)}$ can be written as fractional transformations of $T_{j}(z)=\cos(j\arccos(z))$, the first-kind Chebyshev polynomial of degree $j$. More precisely,
\begin{equation}
r_m^{(T)}(z) = \frac{1}{\alpha + \beta \: T_{2m}(\frac{S+S^{-1}}{2} z)} = 
\frac{1}{(\alpha - \beta) + 2 \beta \: T_{m}(\frac{S+S^{-1}}{2} z)^2}
\label{eq1}
\end{equation}
with
\begin{equation}\label{eq:alphbet}
\begin{tabular}{ll}
$\displaystyle \alpha = \frac{ S^{2m} + S^{-2m} }{ S^{2m} - S^{-2m} }$, & $\displaystyle \beta = \frac{2}{S^{2m}-S^{-2m}}$.
\end{tabular}
\end{equation}
Therefore, $r_m^{(T)}$ is of exact type $(0,2m)$. Moreover, it is equioscillating $2m+1$ times on the interval $[-2/(S+S^{-1}),2/(S+S^{-1})]$, alternating between the values $(\alpha \pm \beta)^{-1}$.
\end{lemma}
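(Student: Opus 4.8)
The plan is to uncover a Joukowski--Chebyshev structure hidden in the mapped nodes and then to match the two rational functions through a partial-fraction (residue) comparison. Writing $c = (S+S^{-1})/2$ for the constant appearing in the argument of the Chebyshev polynomial, the first observation I would make is that $c\,z_j^{(T)} = \tfrac12(W_j + W_j^{-1})$ with $W_j := S e^{i\theta_j^{(T)}}$; that is, $c\,z_j^{(T)}$ is the image of $W_j$ under the Joukowski map $J(W)=\tfrac12(W+W^{-1})$. The algebraic identity driving everything is $T_{2m}(J(W)) = \tfrac12(W^{2m}+W^{-2m})$, valid for all $W\neq 0$ by analytic continuation from $W=e^{i\phi}$. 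Because the trapezoid nodes satisfy $\theta_j^{(T)} = \pi(2j-1)/(2m)$, we get $W_j^{2m} = S^{2m} e^{i\pi(2j-1)} = -S^{2m}$ and likewise $W_j^{-2m} = -S^{-2m}$.

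Next I would identify the poles of the claimed closed form. Set $q(z) := \alpha + \beta\,T_{2m}(cz)$, a polynomial of degree exactly $2m$ in $z$ (since $\beta\neq 0$ and $\deg T_{2m}=2m$). Using the identity above, $q(z_j^{(T)}) = \alpha + \tfrac{\beta}{2}(W_j^{2m}+W_j^{-2m}) = \alpha - \tfrac{\beta}{2}(S^{2m}+S^{-2m})$, and inserting the explicit $\alpha,\beta$ from \eqref{eq:alphbet} shows this vanishes. Hence $z_1^{(T)},\ldots,z_{2m}^{(T)}$ are exactly the $2m$ roots of $q$; they are simple for $S>1$ because $J$ is injective on the circle $|W|=S$. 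Consequently $1/q$ is a proper rational function vanishing at infinity, with partial-fraction expansion $1/q(z) = \sum_{j=1}^{2m} a_j/(z_j^{(T)}-z)$ and residue coefficients $a_j = -1/q'(z_j^{(T)})$.

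The heart of the argument, and the step I expect to be the main obstacle, is to evaluate these residues and verify $a_j = w_j^{(T)}$. Here I would differentiate the Joukowski identity: from $T_{2m}(J(W))=\tfrac12(W^{2m}+W^{-2m})$ and $J'(W)=\tfrac12(1-W^{-2})$ one obtains, after clearing a factor of $W$, that $T_{2m}'(J(W)) = 2m\,(W^{2m}-W^{-2m})/(W-W^{-1})$. Evaluating at $W=W_j$, using $W_j^{2m}-W_j^{-2m} = -(S^{2m}-S^{-2m})$ and $W_j-W_j^{-1} = S e^{i\theta_j^{(T)}} - S^{-1} e^{-i\theta_j^{(T)}}$, and combining with $q'(z_j^{(T)}) = \beta c\,T_{2m}'(c\,z_j^{(T)})$, the $S^{2m}-S^{-2m}$ factors cancel against $\beta$ and $a_j$ collapses to $\frac{1}{2m}\,\frac{S e^{i\theta_j^{(T)}} - S^{-1} e^{-i\theta_j^{(T)}}}{S+S^{-1}} = w_j^{(T)}$. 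Since two proper rational functions sharing the same simple poles, residues, and value at infinity must coincide, this establishes the first equality in \eqref{eq1}.

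The remaining claims follow quickly. The second representation is just the composition identity $T_{2m} = T_2\circ T_m$ with $T_2(y)=2y^2-1$, which rewrites $\alpha+\beta T_{2m}(cz)$ as $(\alpha-\beta)+2\beta\,T_m(cz)^2$. The exact type $(0,2m)$ is immediate from the closed form, the numerator being the nonzero constant $1$ and the denominator a degree-$2m$ polynomial with no cancellation. Finally, on $[-1/c,1/c]$ we have $cz\in[-1,1]$, where $T_{2m}$ equioscillates between $\pm 1$ at its $2m+1$ extrema $cz=\cos(k\pi/(2m))$, $k=0,\ldots,2m$; correspondingly $r_m^{(T)}$ alternates between $(\alpha+\beta)^{-1}$ and $(\alpha-\beta)^{-1}$, both positive and finite for $S>1$, which gives the asserted $(2m+1)$-fold equioscillation on $[-2/(S+S^{-1}),2/(S+S^{-1})]$.
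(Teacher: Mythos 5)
Your proposal is correct and follows essentially the same route as the paper's proof: identify the mapped trapezoid nodes as the zeros of $\alpha+\beta\,T_{2m}\bigl(\tfrac{S+S^{-1}}{2}z\bigr)$ via the identity $T_{2m}\bigl(\tfrac{W+W^{-1}}{2}\bigr)=\tfrac{W^{2m}+W^{-2m}}{2}$ with $W_j^{2m}=-S^{2m}$, then match residues to the weights $w_j^{(T)}$ and read off the second form and the equioscillation from $T_{2m}=2T_m^2-1$. The only cosmetic differences are that you obtain the derivative formula by differentiating the Joukowski identity where the paper cites $T_{2m}'=2m\,U_{2m-1}$ together with the closed form of $U_{2m-1}$, and that you explicitly justify simplicity of the poles (injectivity of the Joukowski map on $|W|=S$), which the paper leaves implicit.
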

\begin{proof}
We only consider the first equality in \eqref{eq1}, the second following from the relation
$T_{2m}(z) = 2 T_{m}(z)^2 - 1$. 
Let the rational function
\begin{equation}
w(z) = \frac{1}{\alpha + \beta \: T_{2m}(\frac{S+S^{-1}}{2} z)}
\end{equation}
be defined with $\alpha$ and $\beta$ as in \eqref{eq:alphbet}. Being clearly of type $(0,2m)$, it suffices to prove that $w$ has the same poles as $r_m^{(T)}$ defined in \eqref{eq:traprule} and the same residues at these points. We make use of the following formulas for Chebyshev polynomials of a complex variable \cite{mason}, namely
\begin{equation}\nonumber
\begin{array}{lll}
\displaystyle T_{2m} \left(\frac{z + z^{-1}}{2}\right) &=& \displaystyle \frac{z^{2m} + z^{-2m}}{2}, \\
\displaystyle U_{2m-1}\left(\frac{z+z^{-1}}{2}\right) &=& \displaystyle \frac{z^{2m} - z^{-2m}}{z-z^{-1}}, \\
\displaystyle  \frac{\d}{\!\d z} T_{2m}(z) &=& \displaystyle 2m \: U_{2m-1}(z),
\end{array}
\end{equation}
where $U_{2m-1}$ is the Chebyshev polynomial of the second kind of degree $2m-1$. Defining $u_j = S e^{i\theta_j^{(T)}}$, then
\begin{eqnarray*}
\frac{S+S^{-1}}{2} z_j^{(T)} = \frac{u_j + u_j^{-1}}{2}  \quad \text{and} \quad u_j^{2m} = S^{2m} e^{2i \cdot m\theta_j^{(T)}} = -S^{2m}.
\end{eqnarray*}
Therefore,
\begin{eqnarray*}
\alpha + \beta \: T_{2m}\left(\frac{S+S^{-1}}{2} z_j^{(T)}\right)
& = & \alpha + \beta \: T_{2m}\left(\frac{u_j + u_j^{-1}}{2}\right) \\
& = & \alpha + \beta \frac{u_j^{2m} + u_j^{-2m}}{2} \\
& = & \alpha - \beta \frac{S^{2m} + S^{-2m}}{2},
\end{eqnarray*}
which gives zero when inserting the values of $\alpha$ and $\beta$. It remains to show that the residue of $w$ at $z_j^{(T)}$ is precisely $-w_j^{(T)}$. To this end we make use of the fact that the residue at a point $z$ of a rational function $p/q$, where $p$ and $q$ are polynomials such that $q$ has a simple root at $z$ and $p$ is nonzero there, is given by $p(z)/q'(z)$. The residue of $w$ at $z=z_j^{(T)}$ is hence given by
\begin{eqnarray*}
 \left.
 \frac{1}{{\frac{\d}{\d z}} \left(\alpha + \beta \:  T_{2m}\big(\frac{S+S^{-1}}{2} z\big)\right)  }
 \right|_{z=z_j^{(T)}} 
  &=& \frac{1}{\beta\, m\, (S+S^{-1})\, U_{2m-1}\big(\frac{S+S^{-1}}{2} z_j^{(T)}\big)} \\
  &=& \frac{1}{\beta\, m\, (S+S^{-1})\, U_{2m-1}\big(\frac{u_j + u_j^{-1}}{2}\big)} \\
  &=& \frac{1}{\beta\, m\, (S+S^{-1})\, \frac{u_j^{2m} - u_j^{-2m}}{u_j - u_j^{-1}}} \\
  &=& \frac{u_j - u_j^{-1}}{\beta\, m\, (S+S^{-1})\, (-S^{2m} + S^{-2m})},
\end{eqnarray*}
which indeed agrees with $-w_j^{(T)}$. The equioscillation property of $r_m^{(T)}$ follows directly from the equioscillation of $T_{2m}$.
\end{proof}

We learn from Lemma~\ref{lem:trapez} that $r_m^{(T)}$ is precisely a \emph{type-1 Chebyshev filter} as commonly used in electronic filter design; see, e.g., \cite[\S~13.5]{hamming1989digital}.  
A graphical illustration of $r_m^{(T)}$ is given in Figure~\ref{fig:trapez}. Note that this rational function is perfectly equioscillating on the interval $[-2/(S+S^{-1}),2/(S+S^{-1})]$, which becomes wider as the ellipse gets flatter ($S\to 1$). On the other hand, the function  values are between  $(\alpha\pm\beta)^{-1}$ with $\beta = 2/(S^{2m} - S^{-2m})$, so the oscillations become larger as $S\to 1$. 
In the other limiting case, when $S\to\infty$, there are no oscillations and
\begin{equation}\label{eq:rmTinf}
	r_m^{(T)}(z) = \frac{1}{2m} \sum_{j=1}^{2m} \frac{e^{i\pi (j-1/2)/m}}{e^{i\pi (j-1/2)/m} - z},
\end{equation}
which is also known as the \emph{Butterworth filter}; see, e.g., \cite[\S~12.6]{hamming1989digital} or \cite{austin2013}. This relation between the type-1 Chebyshev and Butterworth filters is well known in the literature (see, e.g., \cite[p.~119]{blinchikoff1976filtering}).
By symmetry considerations one can show that $r_m^{(T)}$ in \eqref{eq:rmTinf} attains the value $1/2$ for $z=\pm 1$. 
This property is also shared by $r_m^{(G)}$ as we show in the following remark.

%
%


\begin{figure}
  \centering\includegraphics[width=.7\linewidth]{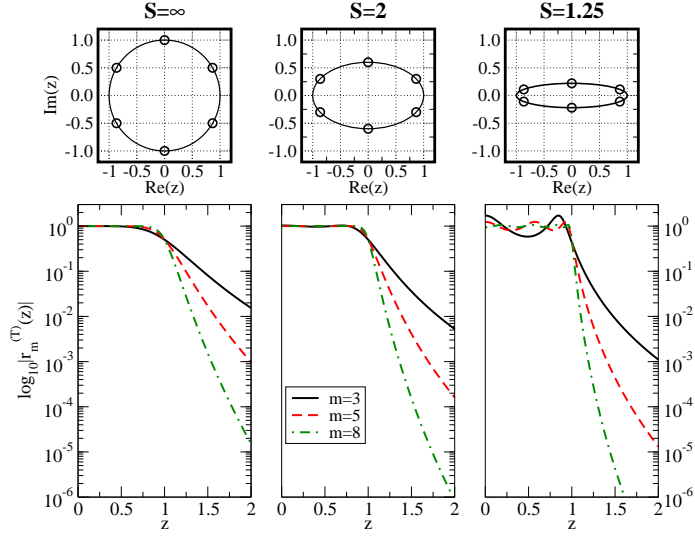}\\
  \caption{
The contours $\Gamma_S$ along with the associated rational functions $r_m^{(T)}$ obtained from the mapped trapezoid quadrature rule for three different values of $S\in\{\infty,2,1.25\}$. The modulus of $r_m^{(T)}$ is plotted for each value of $S$ and for three different values of $m\in\{ 3,5,8\}$ over the interval $[0,2]$ (it is a symmetric function).  For clarity, the poles of $r_m^{(T)}$ are shown for the case $m=3$ only.}
\label{fig:trapez}
\end{figure}

\begin{remark}\label{rem:symm}
Assume that the poles and corresponding weights have a four-fold symmetry about the origin, i.e., if $(w,z)$ is a weight--pole pair, then also $(-\overline{w},-\overline{z})$, $(-w,-z)$, and $(\overline{w},\overline{z})$ are weight--pole pairs. In this case one can verify that
\begin{equation*}
r_m(\pm 1) = \sum_{j=1}^{2m} \frac{w_j}{z_j - 1} = \sum_{j=1}^{m/2} \frac{\omega_j}{\pi} \frac{S^8-1}{S^8 - 2 \cos(2 \theta_j) S^4 + 1}.
\end{equation*}
Whatever the values $\theta_j$, we have
\begin{equation*}
\frac{1}{\pi} \frac{S^4-1}{S^4+1} \sum_{j=1}^{m/2} \omega_j < r_m(\pm 1) < \frac{1}{\pi} \frac{S^4+1}{S^4-1} \sum_{j=1}^{m/2} \omega_j.
\end{equation*}
For $S \rightarrow \infty$ we obtain 
$
r_m(\pm 1) = \pi^{-1} \sum_{j=1}^{m/2} \omega_j.
$
Therefore for both the Gauss and the trapezoid quadrature rules we have 
$
\sum_{j=1}^{m/2} \omega_j^{(G),(T)} = \pi/2,
$
hence 
\begin{equation*}
\lim_{S \rightarrow \infty} r_m^{(G),(T)}(\pm 1) = \frac{1}{2}.
\end{equation*}
\end{remark}

\section{A method based on Zolotarev approximants}\label{sec:zolo}

Both quadrature rules in Section~\ref{sec:quad} achieve a small approximation
error for \eqref{eq:cauchy} throughout the complex plane, except when $z$ is close to
the contour $\Gamma$.
Assume again that the wanted eigenvalues of $M$ are contained in the interval $(-1,1)$.
Then for a fast convergence of FEAST, in view of Theorem~\ref{thm:subspace-iteration},
our main concern should be the accuracy of $r_m(M)$ as an approximation
to the indicator function $\ind_{[-G,G]}(M)$, where
\[
 \ind_{[-G,G]}(z) = \begin{cases} 1 &\mbox{if } z \in [-G,G] \\
0 & \mbox{otherwise}, \end{cases}
\]
with some $G<1$. We will refer to $G$ as
the \emph{gap parameter}, because it is related to the gap between the wanted and unwanted eigenvalues. The smaller the value of $G$, the larger the gap.
Since $M$ has real eigenvalues, it seems natural to concentrate all of $r_m$'s
``approximation power'' to the real line. In other words, we are looking
for a rational function $r_m$ of degree $2m$ such that $r_m$ is closest
to $1$ on a largest possible interval $[-G,G]\subset (-1,1)$, and closest to $0$ on
a largest possible subset of the complement.
Such a rational function is explicitly known due to an ingenious construction
of Zolotarev~\cite{Zol77} and in the filter design literature typically referred to as 
\emph{band-pass Cauer filter} or \emph{elliptic filter} (see, e.g., \cite[\S~3.7.4]{blinchikoff1976filtering} 
or \cite[\S~13.6]{van1982analog}). 
The construction of this filter makes use of elliptic functions.

Let the Jacobi elliptic
function $\sn(w;\kappa) = x$ be defined by\footnote{The definition of
elliptic functions is not consistent in the literature. We stick to the definitions used in
\cite[\S 25]{Akh90}. For example, in \textsc{Matlab} one would type
\texttt{sn = ellipj(w,kappa{\textasciicircum}2)} and \texttt{K = ellipke(kappa{\textasciicircum}2)}
to obtain the values of $\sn(w;\kappa)$ and $\K(\kappa)$, respectively.}
\[
w = \int_0^x \frac{1}{\sqrt{(1-t^2)(1-\kappa^2 t^2)}} \d t,
\]
and let the complete elliptic integral for the modulus $\kappa$ be denoted by
\[
    \K(\kappa) = \int_0^1 \frac{1}{\sqrt{(1-t^2)(1-\kappa^2 t^2)}} \d t.
\]
The following well-known theorem summarizes one of Zolotarev's findings;
we use a formulation given by Akhiezer \cite[Chapter~9]{Akh90}.

\begin{theorem}[Zolotarev, 1877]\label{thm:zolo}
The best uniform rational approximant of type $(2m-1,2m)$
for the signum function $\sgn(x)$ on the set
$[-R,-1]\cup [1,R]$, $R>1$, is given by
\[
    s_m(x) = x D \frac{\prod_{j=1}^{m-1} (x^2 + c_{2j})}{\prod_{j=1}^m (x^2 + c_{2j-1})}
    \quad \text{with} \quad c_j = \frac{\sn^2(j\K(\kappa)/(2m);\kappa)}{1-\sn^2(j\K(\kappa)/(2m);\kappa)},
\]
where $\kappa=\sqrt{1-1/R^{2}}$ and the constant $D$ is uniquely
determined by the condition
\[
    \min_{x\in [-R,-1]} s_m(x) + 1 = \max_{x\in [1,R]} -s_m(x) + 1.
\]
\end{theorem}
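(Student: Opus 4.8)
The plan is to establish optimality of the stated $s_m$ by verifying the \emph{equioscillation characterization} of best rational approximation, rather than rederiving the formula from scratch. Recall that a rational function of type $(p,q)$ is the best uniform approximant to a continuous function on a compact real set if and only if the error attains its maximal modulus with alternating signs at a number of points exceeding the number of free parameters; for type $(2m-1,2m)$ this means $4m+1$ alternation points. My first step would be a symmetry reduction: since $\sgn$ is odd and $[-R,-1]\cup[1,R]$ is invariant under $x\mapsto -x$, if $r$ is a best approximant then so is $-r(-x)$, and by uniqueness of the best approximant the two must coincide. Hence the optimal approximant is odd, which explains and justifies the ansatz $s_m(x)=xD\,\prod_{j=1}^{m-1}(x^2+c_{2j})/\prod_{j=1}^{m}(x^2+c_{2j-1})$. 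It then suffices to control the error $1-s_m(x)$ on $[1,R]$, the behaviour on $[-R,-1]$ following by oddness.

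The heart of the matter, and the step I expect to be the main obstacle, is the elliptic-function parametrisation through which the error becomes a genuine oscillation. One introduces a substitution $x=\phi(w)$ built from the Jacobi function $\sn(\,\cdot\,;\kappa)$ with $\kappa=\sqrt{1-1/R^2}$, mapping a real $w$-interval monotonically onto $[1,R]$. The decisive point is that the prescribed nodes $c_j=\sn^2(j\K(\kappa)/(2m);\kappa)/(1-\sn^2(j\K(\kappa)/(2m);\kappa))$ are chosen precisely so that, after this substitution, the zeros and poles of $s_m$ sit at the images of the \emph{equally spaced} parameters $w_j=j\K(\kappa)/(2m)$. Exploiting the transformation theory of elliptic functions — specifically that a rational map of degree $2m$ carries $\sn(\,\cdot\,;\kappa)$ to an elliptic function of a transformed modulus — one shows that $s_m\circ\phi$ is, up to the normalisation $D$, a single elliptic function of $w$ that rises and falls monotonically between consecutive $w_j$. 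The equal spacing of the $w_j$ then forces the error $1-s_m$ to attain extrema of equal magnitude and alternating sign at these images, producing precisely the required number of alternation points.

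Finally I would fix the constant $D$ and assemble the characterization. The normalisation condition $\min_{x\in[-R,-1]}s_m(x)+1=\max_{x\in[1,R]}(-s_m(x)+1)$ is exactly the requirement that the largest upward and downward deviations of $s_m$ from $\sgn$ be equal in magnitude; choosing $D$ to enforce it balances the two families of extrema so that all alternation points share a common extremal value $\|\sgn-s_m\|$. Combined with the oddness, which reflects the $[1,R]$ extrema onto $[-R,-1]$ with the correct sign pattern, this yields the full alternating set on $[-R,-1]\cup[1,R]$, and the characterization theorem then certifies $s_m$ as the unique best approximant. The principal technical difficulty throughout is the verification of the elliptic identities — that the equally spaced parameters $w_j$ really do map to the stated $c_j$ and that the composed function is a pure oscillation — which is the content of Zolotarev's transformation-theoretic construction and is carried out in detail in Akhiezer~\cite[Chapter~9]{Akh90}.
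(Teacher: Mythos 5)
Your overall plan (oddness reduction, elliptic parametrization deferred to Akhiezer, certification by equioscillation) follows the classical route, but the certification step contains a genuine gap: the ``if and only if'' alternation characterization you invoke is a theorem about a \emph{single closed interval}, and it does not hold as stated on the disconnected set $[-R,-1]\cup[1,R]$. The paper is explicit about exactly this point: the footnote accompanying Theorem~\ref{thm:zolo} notes that Chebyshev's equioscillation criterion ``does not strictly apply in the case of two intervals.'' Concretely, the sufficiency (counting) argument breaks on two intervals: if $r^*=P^*/Q^*$ is a competitor of type $(2m-1,2m)$, then $Q^*$, although nonvanishing on $[-R,-1]\cup[1,R]$, may change sign inside the gap $(-1,1)$; at a pair of alternation points straddling the gap, the sign alternation of $r^*-r$ then need not transfer to the polynomial $P^*Q-PQ^*$, so one zero is lost in the count. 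With the $4m+1$ alternation points you ask for, this leaves only $4m-1$ guaranteed zeros of a polynomial of degree at most $(2m-1)+2m=4m-1$ --- no contradiction, hence no optimality conclusion. Your count is also at odds with the actual function: as the paper points out, $s_m$ equioscillates $4m+2$ times, and that ``extra'' point is precisely what repairs the two-interval argument (at least $4m$ within-component sign changes force at least $4m$ zeros, exceeding the degree bound $4m-1$). The uniqueness you use for the symmetry reduction suffers from the same defect: it is a consequence of the interval characterization theorem and is not available off the shelf on a union of two intervals, so deducing oddness of the best approximant this way is essentially circular.

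To repair the argument you must either (i) state and prove the corrected two-interval criterion --- $4m+2$ alternation points suffice, by the refined count above, treating separately the cases where $Q^*$ has equal or opposite signs on the two components --- or (ii) follow the route the paper actually references: Zolotarev's own reduction, reproduced in \cite[Chapter~9]{Akh90}, writes $\sgn(x)=x/\sqrt{x^2}$ and converts the problem into a \emph{weighted} best rational approximation problem for $1/\sqrt{u}$ on the single interval $[1,R^2]$, where the classical alternation and uniqueness theory does apply; the oddness of $s_m$ then emerges from the construction rather than being assumed. For what it is worth, the paper itself offers no proof of this statement --- it cites Akhiezer and confines the two-interval subtlety to the footnote --- so your deferral of the elliptic-function identities to \cite{Akh90} is perfectly reasonable; it is only the optimality certification that needs to be fixed.
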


The last normalization condition in Theorem~\ref{thm:zolo} ensures that
$s_m(x)$ is equioscillating about the value $-1$ on $[-R,-1]$, and
equioscillating about the value $1$ on $[1,R]$. In fact, it is known that
there is a number of $4m+2$ equioscillation points, a number that clearly
has to be even due to the symmetry $s_m(-x)=-s_m(x)$.
This is one equioscillation point more than required by Chebyshev's characterization
theorem for uniform best rational approximations (see, e.g., \cite[\S 2.2]{PP11}), which
states that a rational function of type $(\mu,\nu)$ with $\mu+\nu+2$ equioscillation
points is a unique best approximant\footnote{Note that Chebyshev's classical
equioscillation criterion is typically stated for a single closed interval and does not
strictly apply in the case of two intervals. However, looking closer at Zolotarev's construction \cite{Zol77}
we find that it is based on a weighted best rational  approximant  for $1/\sqrt{x}$ on the single interval $[1,R^2]$, on which the equioscillation criterion holds. Zolotarev then uses the relation $\sgn(x) = x/\sqrt{x^2}$ to find $s_m(x)$.}.

%
%

\begin{figure}
  \centering\includegraphics[width=0.7\linewidth]{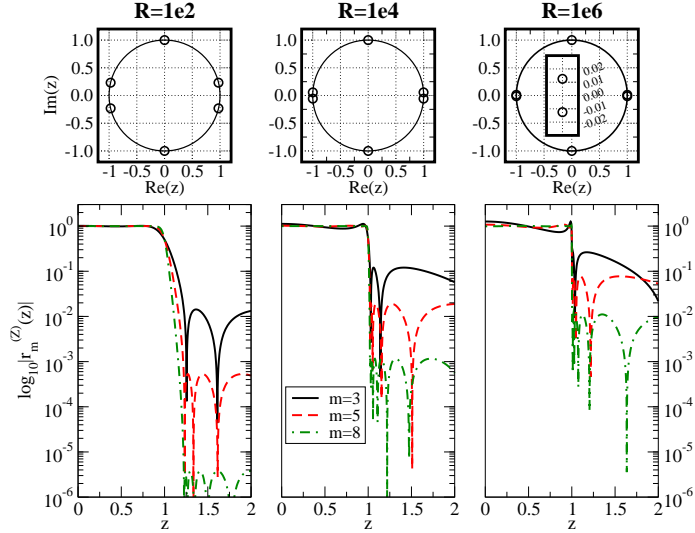}\\
  \caption{Transformed rational function $r_m^{(Z)}(z)$ based on Zolotarev's approximant. 
The parameters are $m=3$ and $R\in\{1e2,1e4,1e6\}$ is varied. The interval of equioscillation about the value $1$ 
is $[-G,G]$, where $G=(\sqrt{R}-1)/(\sqrt{R}+1)$; see formula~\eqref{eq:mzolo}. As stated in Corollary~\ref{cor:zolo}
all poles lie on the unit circle and appear in complex conjugate pairs.}\label{fig:zolotarev}
\end{figure}


Let us briefly highlight some properties of  $s_m(x)$. 
First of all, $s_m(0)=0$ due to the symmetry, and $s_m(\infty)=0$ 
as $s_m$ is a rational function
of type $(2m-1,2m)$.
Let us define by $E_m$ the maximal modulus of the error function
$e_m(x) := \sgn(x)-s_m(x)$ over the set $[-R,-1]\cup [1,R]$, i.e.,
\[
 E_m := \max_{x\in [-1,R]\cup [1,-R]} |e_m(x)| = \max_{x\in [-1,-R]\cup [1,R]} |\sgn(x)-s_m(x)|.
\]
Then $|e_m(x)|$ takes on its maximum $E_m$ at the points $x\in\{-R,-1,1,R\}$.  
In \cite[eq.~(3.17)]{ML05} lower and upper bounds on $E_m$ have been given as
\begin{equation}\label{eq:zolerr}
 \frac{4 \rho^m}{1 + \rho^m} \leq E_m \leq  4\rho^m,
\end{equation}
where
$$ \mu  = \left(\frac{\sqrt{R} - 1}{\sqrt{R} + 1}\right)^2, \quad \mu' = \sqrt{1-\mu^2}, \quad
\rho = \rho(\mu) = \exp \left( - \frac{\pi \K(\mu')}{2 \K(\mu)}\right).
$$

Our aim is to determine a Moebius transformation
\begin{equation}\label{eq:rtrans}
    x = t(z) = \frac{a + bz}{c + dz}
\end{equation}
such that the rational function
\begin{equation}\label{eq:rzolo}
 r_m^{(Z)}(z) := \frac{s_m(t(z)) + 1}{2}
\end{equation}
is an approximation of the indicator
function $\ind_{[-G,G]} (z)$ on some interval $[-G,G]\subset (-1,1)$ with gap parameter $G<1$.

Due to the symmetry of the indicator function, it is natural to demand that $r_m^{(Z)}(z) = r_m^{(Z)}(-z)$
for real $z$, and we will also prescribe $r_m^{(Z)}(-1)=r_m^{(Z)}(1)=1/2$. This yields the following
conditions for the transformation $t$:
\[
    t(-1) = 0, \quad t(-G) = 1,  \quad t(G) = R, \quad t(1) = \infty.
\]
From these conditions the transformation $t$ and $G$ are readily determined as
\begin{equation}\label{eq:mzolo}
    x = t(z) = \sqrt{R}\, \frac{1+z}{1-z}, \quad G = \frac{\sqrt{R}-1}{\sqrt{R}+1}.
\end{equation}
By construction, the rational function $r_m^{(Z)}$ is indeed equioscillating about
the value $1$ for $z\in [-G,G]$, and equioscillating about the value $0$ for
$z\in [-\infty,-G^{-1}]$ and $z\in [G^{-1},+\infty]$.
The number of $4m+2$ equioscillation points of $s_m(x)$ is inherited by $r_m^{(Z)}(z)$. 
Note that a rational transformation of type $(1,1)$
 inserted into a rational function of type
$(2m-1,2m)$ in general gives a rational function of type $(2m,2m)$. For a visual example 
see Figure~\ref{fig:zolotarev}.

The following corollary summarizes the above findings.

\begin{corollary}\label{cor:zolo}
The rational function $r_m^{(Z)}$ given by \eqref{eq:rzolo} and \eqref{eq:mzolo}
is the best uniform rational approximant of type $(2m,2m)$ of the indicator function
$\ind_{[-G,G]}(z)$ on $$\left[-G,G\right]
\quad \text{and} \quad [ -\infty,-G^{-1} ] \cup
[G^{-1},+\infty ].$$
The error curve $e_m'(z) := \ind_{[-G,G]}(z) - r_m^{(Z)}(z)$
equioscillates on these sets with error $E_m':= \max_{z\in [-G,G]} |e_m'(z)|$
bounded by
\[
 \frac{2 \rho^m}{1 + \rho^m} \leq E_m' \leq  2\rho^m,
\]
where
$$
\mu  = G^2, \quad \mu' = \sqrt{1-\mu^2}, \quad
\rho = \rho(\mu) = \exp \left( - \frac{\pi \K(\mu')}{2 \K(\mu)}\right).
$$
Moreover, all $2m$ poles of $r_m^{(Z)}$ lie on the unit circle and appear in
complex conjugate pairs.
\end{corollary}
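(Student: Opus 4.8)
The plan is to transfer every claimed property of $r_m^{(Z)}$ back to the corresponding property of Zolotarev's $s_m$ established in Theorem~\ref{thm:zolo}, using that $t$ in \eqref{eq:mzolo} is a Moebius map and that $r_m^{(Z)} = (s_m \circ t + 1)/2$. First I would verify the optimality claim. The key observation is that $t(z) = \sqrt{R}\,(1+z)/(1-z)$ is a bijection of the extended real line that, by the four interpolation conditions $t(-1)=0$, $t(-G)=1$, $t(G)=R$, $t(1)=\infty$, maps $[-G,G]$ onto $[1,R]\cup[-R,-1]$ (the point $z=1$ going to $+\infty$ and continuing from $-\infty$ on the other branch) and maps the two outer rays $[-\infty,-G^{-1}]\cup[G^{-1},+\infty]$ onto the remaining portion $[-1,1]$ of the axis. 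Since an affine rescaling $s\mapsto (s+1)/2$ and a Moebius change of variable preserve best uniform approximation, the fact that $s_m$ is the unique best type-$(2m-1,2m)$ approximant to $\sgn$ on $[-R,-1]\cup[1,R]$ translates directly into $r_m^{(Z)}$ being the best type-$(2m,2m)$ approximant to $\ind_{[-G,G]}$ on the stated sets. (One must note the degree bookkeeping already flagged in the text: composing the type-$(2m-1,2m)$ function $s_m$ with a type-$(1,1)$ map generically yields type $(2m,2m)$.)

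Next I would establish the equioscillation and the error bound. The affine map $s\mapsto(s+1)/2$ sends the error $e_m(x)=\sgn(x)-s_m(x)$ to $e_m'(z) = \ind_{[-G,G]}(z) - r_m^{(Z)}(z) = \tfrac12 e_m(t(z))$ on the image sets; the $4m+2$ alternation points of $s_m$ are carried bijectively to alternation points of $r_m^{(Z)}$, and the extremal modulus is exactly halved, so $E_m' = \tfrac12 E_m$. Inserting the Zolotarev bounds \eqref{eq:zolerr} then gives $\tfrac{2\rho^m}{1+\rho^m}\le E_m'\le 2\rho^m$. Here I would check the modulus identification: Theorem~\ref{thm:zolo} uses $\kappa=\sqrt{1-1/R^2}$, whereas \eqref{eq:zolerr} and the corollary state the bound in terms of $\mu=\big((\sqrt R-1)/(\sqrt R+1)\big)^2 = G^2$, using \eqref{eq:mzolo}. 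The consistency of these two parametrizations is exactly what makes the $\rho=\rho(\mu)$ of the corollary the correct quantity, so I would point to \cite[eq.~(3.17)]{ML05} for this reformulation rather than re-deriving it.

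Finally, for the pole location I would work directly with the prescription $x=t(z)$: a pole of $r_m^{(Z)}$ at $z_0$ corresponds to a pole of $s_m$ at $x_0=t(z_0)$, i.e.\ to a root $x_0^2=-c_{2j-1}$ of the denominator of $s_m$, with $c_{2j-1}>0$, so $x_0$ is purely imaginary, $x_0=\pm i\sqrt{c_{2j-1}}$. Inverting \eqref{eq:mzolo} gives $z_0 = (x_0-\sqrt R)/(x_0+\sqrt R)$; for purely imaginary $x_0$ the numerator and denominator are complex conjugates of equal modulus, whence $|z_0|=1$. That the poles come in complex conjugate pairs follows because $s_m$ has real coefficients, so its non-real poles occur in conjugate pairs, and $t$ has real coefficients and therefore commutes with conjugation.

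The routine verification I expect to be cleanest is the halving of the error and the transfer of alternation points, since these are immediate once $t$ is known to be the correct bijection. The main obstacle is the careful bookkeeping of how $t$ maps the extended real line onto the two target sets of Theorem~\ref{thm:zolo}: one has to track the point at infinity and the branch transition at $z=1$, and confirm that the two outer rays together with the interval $[-G,G]$ cover exactly the approximation set on which $s_m$ is extremal, with orientation consistent enough that the signum/indicator correspondence and the sign pattern of the alternation are preserved.
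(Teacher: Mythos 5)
Your overall strategy---transferring every property of $s_m$ through the Moebius map $t$---is exactly the paper's (its proof is a terse version of the same transfer), and your treatment of the poles, the conjugate-pair symmetry, the identification $\mu = G^2$, and the factor-$\frac12$ error scaling all agree with it. However, the step you yourself single out as the crux, namely the bookkeeping of how $t$ maps the extended real line onto the approximation sets of Theorem~\ref{thm:zolo}, is stated incorrectly, and as written it would break the proof. From the interpolation conditions $t(-G)=1$, $t(G)=R$ that you list, and since the pole $z=1$ of $t$ lies outside $[-G,G]$ (recall $G<1$), the map $t$ sends $[-G,G]$ homeomorphically onto the single interval $[1,R]$---not onto $[1,R]\cup[-R,-1]$, and there is no branch transition inside $[-G,G]$. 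Likewise the outer set $[-\infty,-G^{-1}]\cup[G^{-1},+\infty]$, which is one connected arc through $\infty$ on the extended real line with $t(\infty)=-\sqrt{R}$, $t(\pm G^{-1})\in\{-1,-R\}$, is mapped onto $[-R,-1]$---not onto $[-1,1]$, which is not even part of the set on which $s_m$ approximates anything.

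This matters because the entire transfer rests on the identity $\ind_{[-G,G]}(z) = \bigl(\sgn(t(z))+1\bigr)/2$ on the two approximation sets: it is what makes $r_m^{(Z)} = (s_m\circ t+1)/2$ approximate $1$ on $[-G,G]$ (where $\sgn\circ t \equiv 1$) and $0$ on the outer set (where $\sgn\circ t \equiv -1$), and it is what yields your halving identity $e_m' = \frac12\, e_m\circ t$ and hence the bound via \eqref{eq:zolerr}. Under the mapping you describe, $\sgn(t(z))$ would equal $-1$ on part of $[-G,G]$, so $r_m^{(Z)}$ would be close to $0$ there and the error would be of order one, while on the outer rays nothing could be concluded at all, since $s_m$ is neither extremal nor controlled on $[-1,1]$. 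With the corrected correspondence the rest of your argument goes through and coincides with the paper's proof, which states that the optimality claim follows from the bijectivity of $t$, derives the error bounds from \eqref{eq:zolerr} together with $\mu=G^2$, and computes the poles explicitly as $\bigl(\pm i\sqrt{c_{2j-1}}-\sqrt{R}\bigr)/\bigl(\pm i\sqrt{c_{2j-1}}+\sqrt{R}\bigr)$, exactly as you do.
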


\begin{proof}
The first statement follows from the fact that $r_m^{(z)}$ defined in \eqref{eq:rzolo}
 has been obtained from $s_m(x)$ by the bijective transformation $x=t(z)$ in \eqref{eq:rtrans}.

The error inequalities follow from \eqref{eq:rzolo} and \eqref{eq:zolerr},
and the fact that $\mu = G^2$.

Finally, from \eqref{eq:rzolo}, we find that $z_j$ is a pole of $r_m^{(Z)}$ if and only if $t(z_j)$ is a pole of $s_m$ defined in Theorem~\ref{thm:zolo}. Inverting the relation $x = t(z)$ and using the fact that the poles of $s_m$ are $\pm i \sqrt{c_{2j-1}}$, $j = 1, \dots, m$, the poles of $r_m^{(Z)}$ are found to be
\begin{eqnarray*}
\frac{\pm i \sqrt{c_{2j-1}} - \sqrt{R}}{\pm i \sqrt{c_{2j-1}} + \sqrt{R}} = \frac{c_{2j-1} - R}{c_{2j-1} + R} \pm i \frac{2 \sqrt{c_{2j-1} R}}{c_{2j-1}+R}, \quad j = 1, \dots, m,
\end{eqnarray*}
which are complex conjugate and of modulus 1.
\end{proof}


\begin{remark}
 When $G$ (and hence $\mu$) is sufficiently close to $1$, it is possible to use  \cite[(17.3.11) and (17.3.26)]{AS84} to derive the asymptotically sharp estimates $K(\mu) \simeq \log(4/\mu')$ and $K(\mu') \simeq \pi/2$ (see also \cite{ML05}), and thereby
give the estimate
\[
 E'_m \simeq 2 \exp\left( -m \frac{\pi^2}{4 \log(4/\mu')}\right) = \exp\left( -m \frac{\pi^2}{2\log(16/(1-G^4))}\right)
\]
in terms of elementary functions.
\end{remark}

\begin{remark}
It may be instructive to study the simplest Zolotarev function $r_m^{(Z)}$ for $m=1$, that is, a
rational function of type $(2m,2m) = (2,2)$.
Let a gap parameter $G<1$ be given.
As the poles of $r_m^{(Z)}$ lie on the
unit circle, and due to symmetry must be $\pm i$, this rational function is of the form
\[
    r_1^{(Z)}(z) = \gamma + \frac{2\delta}{z^2 + 1},
\]
with real numbers $\gamma$ and $\delta$.
Due to the equioscillation property we have $r_1^{(Z)}(\infty) = \gamma = 1 - r_1^{(Z)}(0)$, from
which we find that $2\delta = 1- 2\gamma$. Also due to equioscillation we have $r_1^{(Z)}(G) = 1+\gamma$, from which we then find $\gamma = - G^2 / 2$, i.e.,
\[
    r_1^{(Z)}(z) = -\frac{G^2}{2} + \frac{1 + G^2}{z^2 + 1} =
    -\frac{G^2}{2} + \frac{(i + i\,G^2)/2}{z + i} -  \frac{(i + i\,G^2)/2}{z - i}.
\]
\end{remark}

\section{Comparison of the three quadrature rules}\label{sec:comp}

We are now in the position to assess the three discussed rational functions
$r_m^{(G)}$, $r_m^{(T)}$, and $r_m^{(Z)}$ in view of their performance within
the FEAST method for computing eigenpairs.
A main tool will be Theorem~\ref{thm:subspace-iteration}, which allows
us to characterize the convergence of FEAST in terms of the underlying rational function.
Again assume that all eigenvalues are ordered such that for a given rational function $r_m$ we have \eqref{eq:order}. 
We also assume that a number of $\ell\leq n$ wanted eigenvalues 
$\lambda_1,\lambda_2,\ldots,\lambda_\ell$ 
of $M$ are scaled and shifted to be contained
in the interval $[-G,G]\subset (-1,1)$ for some gap parameter $G<1$. 
Finally, assume that the eigenvalues $\lambda_{n+1},\lambda_{n+2},\ldots,\lambda_N$, which are those outside the search interval not ``covered'' by the
$n$-dimensional  search space, are contained in the set $( -\infty,-G^{-1} ] \cup
[G^{-1},+\infty )$. Such a situation can always be achieved for an appropriately chosen $G$  provided that $r_m$ can separate $\lambda_\ell$ and $\lambda_{n+1}$, i.e., $|r_m(\lambda_{n+1})|>|r_m(\lambda_\ell)|$.
We can then compute for each quadrature rule and parameter $m$ the shape parameter $S>1$ (for 
the Gauss and trapezoid rules), or parameter $R>1$ (for the Zolotarev case),  so that the \emph{worst-case convergence factor}
\begin{equation}\label{eq:worst}
    \mathrm{factor}_\mathrm{worst}(m,G) = \frac{\max_{z \in (-\infty,-G^{-1}]\cup [G^{-1},+\infty)} |r_m(z)| }{ \min_{z \in [-G,G]} |r_m(z)| }
\end{equation}
is smallest possible. 

In Table~\ref{tab:comp} we show a comparison of the worst-case convergence factors for various values of $G$ and $m$. In practice,  the gap paramter $G$ is of course unknown  so that we better consider a whole range of this parameter. As can be seen for all gap parameters $G$ listed in Table~\ref{tab:comp}, the optimal worst-case convergence factors of the Zolotarev rule consistently outperform those obtained via trapezoid and Gauss quadrature  (with Gauss being slightly better than trapezoid). Let us discuss this table in some more detail. 

\paragraph{Trapezoid rule} 

For the trapezoid rule \eqref{eq:traprule}, a ``natural'' choice of the parameter~$S$ (in Table~\ref{tab:comp} denoted as ``$S=nat$'') is to achieve equioscillation on $[-G,G]$, and by Lemma~\ref{lem:trapez} 
this means that $2/(S+S^{-1}) = G$ should be satisfied. Due to  
the strictly monotone decay of $r_m^{(T)}$ outside the interval $[-G,G]$ of equioscillation, the maximum in \eqref{eq:worst} is always attained at $z = \pm G^{-1}$ and the worst-case convergence factor is given by
\[
    \mathrm{factor}_\mathrm{worst}^{(T)}(m,G) = \frac{r_m^{(T)}(G^{-1})}{r_m^{(T)}(G)} = 
\frac{\alpha + \beta}{\alpha + \beta T_{2m}\big(\frac{S + S^{-1}}{2} G^{-1}\big)}
	= \frac{\alpha+\beta}{\alpha+\beta T_{2m}(G^{-2})},
\]
with $\alpha$ and $\beta$ defined in Lemma~\ref{lem:trapez}.

However, this ``natural'' choice does \emph{not} necessarily minimize \eqref{eq:worst}, see also Table~\ref{tab:comp}. We observed numerically that \eqref{eq:worst} decreases  monotonically when $S\to 1$. However, taking $S$ very close to $1$ may be problematic from a numerical point of view because it means that the ellipse $\Gamma_S$ given by \eqref{eq:Gamma} degenerates to an interval. This means that the poles of $r_m^{(T)}$, which lie on $\Gamma_S$, come potentially close to the wanted eigenvalues, rendering the shifted linear systems in FEAST ill-conditioned or even singular. In our numerical minimization of  \eqref{eq:worst} for finding $S$ we have therefore enforced the constraint $S\geq 1.01$. In most cases reported in Table~\ref{tab:comp} the optimum for \eqref{eq:worst} was attained for $S=1.01$ (or $S$ being very close to this value). 

\paragraph{Gauss rule} Due to the irregular behaviour of $r_m^{(G)}$ defined in~\eqref{eq:gaussrule} it appears difficult to make a direct link between the gap parameter $G$ and the optimal shape parameter $S$ in the case of Gauss quadrature. For given $m$ and $G$ we have therefore computed the optimal parameter $S$ by minimizing  \eqref{eq:worst} numerically (in Table~\ref{tab:comp} denoted as ``$S=opt$''). Again, similar to the case for the trapezoid rule, we find that the optimal value for $S$ is very close to $1$, causing the  ellipse $\Gamma_S$ given by \eqref{eq:Gamma} to be very close to the search interval.

\paragraph{Zolotarev rational function} Corollary~\ref{cor:zolo} tells us that the interval of equioscillation of $r_m^{(Z)}$ about the value~1 is $[-G,G]$ when $R$ is chosen such that $G = (\sqrt{R}-1)/(\sqrt{R}+1)$ (see also \eqref{eq:mzolo}). In Table~\ref{tab:comp} this choice is denoted as ``$R=opt$''. 
Moreover, using the error bounds in that same corollary, the worst-case convergence factor can be bounded from above as
\[
    \mathrm{factor}_\mathrm{worst}^{(Z)}(m,G) = \frac{r_m^{(Z)}(G^{-1})}{r_m^{(Z)}(G)} = \frac{ E_m' }{ 1- E_m' }
	\leq \frac{2\rho^m}{1 - 2\rho^m}.
\]

\begin{table}
\caption{\label{tab:comp}Worst-case convergence factors \eqref{eq:worst} for various parameter gaps $G$ and (half-) degrees $m$.}
\begin{small}
\begin{center}
\begin{tabular}{|c|c|c|c|c|c|c|c|}
\hline 
  \multirow{2}{*}{$G$}    &     \multirow{2}{*}{$m$}   &  \multicolumn{3}{|c|}{Trapezoid}  & \multicolumn{2}{|c|}{Gauss} & \multicolumn{1}{|c|}{Zolotarev} \\
  &     &  $S=\infty$ & $S = nat$ & $S = opt$ & $S=\infty$ & $S = opt$ & $R = opt$ \\ \hline \hline
      &    3  &  8.86e-1 & 6.01e-1 & 6.02e-1 (1.22)  &  8.15e-1 & 5.43e-1 (1.41)  &  1.36e-1 \\ 
      &    6  &  7.85e-1 & 3.15e-1 & 3.00e-1 (1.02)  &  4.96e-1 & 3.40e-2 (1.22)  &  7.46e-3 \\  
      &    9  &  6.95e-1 & 1.89e-1 & 1.01e-1 (1.01)  &  2.13e-1 & 5.24e-3 (1.01)  &  4.51e-4 \\ 
0.98  &   12  &  6.16e-1 & 1.18e-1 & 3.14e-2 (1.01)  &  4.83e-2 & 1.07e-3 (1.13)  &  2.74e-5 \\ 
      &   15  &  5.45e-1 & 7.38e-2 & 9.52e-3 (1.01)  &  2.37e-2 & 6.55e-5 (1.08)  &  1.67e-6 \\ 
      &   30  &  2.98e-1 & 6.24e-3 & 2.39e-5 (1.01)  &  1.06e-3 & 7.89e-10 (1.06)  &  9.73e-13 \\ 
      &   40  &  1.99e-1 & 1.16e-3 & 4.50e-7 (1.01)  &  5.38e-5 & 1.56e-13 (1.06)  &  1.23e-16 \\ \hline
      &    3  &  9.88e-1 & 9.33e-1 & 9.33e-1 (1.07)  &  9.80e-1 & 9.23e-1 (1.28)  &  3.58e-1 \\ 
      &    6  &  9.76e-1 & 7.84e-1 & 7.84e-1 (1.07)  &  9.33e-1 & 6.64e-1 (1.19)  &  4.23e-2 \\ 
      &    9  &  9.65e-1 & 6.29e-1 & 6.29e-1 (1.07)  &  8.63e-1 & 1.43e-1 (1.01)  &  5.83e-3 \\ 
0.998  &   12  &  9.53e-1 & 5.03e-1 & 5.04e-1 (1.07)  &  7.75e-1 & 1.89e-3 (1.06)  &  8.26e-4 \\ 
      &   15  &  9.42e-1 & 4.09e-1 & 4.09e-1 (1.07)  &  6.76e-1 & 1.17e-3 (1.11)  &  1.18e-4 \\ 
      &   30  &  8.87e-1 & 1.79e-1 & 8.89e-2 (1.01)  &  2.06e-1 & 5.63e-6 (1.03)  &  6.87e-9 \\ 
      &   40  &  8.52e-1 & 1.10e-1 & 2.73e-2 (1.01)  &  3.98e-2 & 6.14e-8 (1.03)  &  1.05e-11 \\ \hline
      &    3  &  9.99e-1 & 9.93e-1 & 9.93e-1 (1.02)  &  9.98e-1 & 9.92e-1 (1.26)  &  6.32e-1 \\ 
      &    6  &  9.98e-1 & 9.72e-1 & 9.72e-1 (1.02)  &  9.93e-1 & 9.51e-1 (1.15)  &  3.81e-2 \\ 
      &    9  &  9.96e-1 & 9.40e-1 & 9.40e-1 (1.02)  &  9.85e-1 & 8.60e-1 (1.10)  &  2.31e-2 \\ 
0.9998  &   12  &  9.95e-1 & 8.98e-1 & 8.99e-1 (1.02)  &  9.75e-1 & 7.36e-1 (1.09)  &  5.09e-3 \\ 
      &   15  &  9.94e-1 & 8.51e-1 & 8.52e-1 (1.02)  &  9.62e-1 & 5.94e-1 (1.08)  &  1.14e-3 \\ 
      &   30  &  9.88e-1 & 6.07e-1 & 6.10e-1 (1.02)  &  8.60e-1 & 1.66e-3 (1.04)  &  6.44e-7 \\ 
      &   40  &  9.84e-1 & 4.81e-1 & 4.83e-1 (1.02)  &  7.66e-1 & 3.71e-4 (1.02)  &  4.41e-9 \\ \hline
      &    3  &  1.00 & 9.99e-1 & 9.99e-1 (1.01)  &  1.00 & 9.99e-1 (1.26)  &  1.00 \\ 
       &    6  &  1.00 & 9.97e-1 & 9.97e-1 (1.01)  &  9.99e-1 & 9.95e-1 (1.15)  &  2.15e-1 \\ 
       &    9  &  1.00 & 9.94e-1 & 9.94e-1 (1.01)  &  9.99e-1 & 9.84e-1 (1.09)  &  5.55e-2 \\ 
0.99998  &   12  &  1.00 & 9.89e-1 & 9.89e-1 (1.01)  &  9.97e-1 & 9.65e-1 (1.07)  &  1.59e-2 \\ 
         &   15  &  9.99e-1 & 9.82e-1 & 9.83e-1 (1.01)  &  9.96e-1 & 9.35e-1 (1.06)  &  4.67e-3 \\ 
         &   30  &  9.99e-1 & 9.34e-1 & 9.38e-1 (1.01)  &  9.85e-1 & 6.60e-1 (1.04)  &  1.08e-5 \\ 
         &   40  &  9.98e-1 & 8.89e-1 & 8.99e-1 (1.01)  &  9.74e-1 & 2.21e-1 (1.01)  &  1.90e-7 \\ \hline
\end{tabular}
\end{center}
\end{small}
\end{table}

\begin{remark}
To also appreciate the fact that the Gauss and trapezoid rational approximants decay for $|z|\to\infty$, whereas Zolotarev
equioscillates, we could define another parameter $G_\mathrm{eff} \geq G^{-1}$, and compute the
\emph{effective convergence factor}
\
\[
    \mathrm{factor}_\mathrm{eff}(m,G,G_\mathrm{eff}) =  \frac{ \max_{z \in (-\infty,-G_\mathrm{eff}]\cup [G_\mathrm{eff},+\infty)} |r_m(z)| }{ \min_{z \in [-G,G]} |r_m(z)| }.
\]
The parameter $G_\mathrm{eff}$ corresponds to the modulus of the first unwanted eigenvalue outside the search interval that is not captured by the $n$-dimensional search space. As the Zolotarev rational function is equioscillating towards infinity\footnote{By dropping the absolute term in the partial fraction expansion of $r_m^{(Z)}$, which is precisely of modulus $E_m'$, this rational function could also be forced to decay for $|z|\to\infty$.}, $ \mathrm{factor}_\mathrm{eff}(m,G,G_\mathrm{eff})$ will  be equal to $\mathrm{factor}_\mathrm{worst}(m,G)$ independently of $G_\mathrm{eff}$. On the other hand, for the trapezoid and Gauss rules, $\mathrm{factor}_\mathrm{eff}(m,G,G_\mathrm{eff})$ will decrease as  $G_\mathrm{eff}$ increases.

In practice, however, it is very difficult to get a hand on $G$ and $G_\mathrm{eff}$, and even the number of wanted eigenvalues, in the first place. It is therefore problematic to rely on the faster convergence that FEAST could potentially exhibit using the trapezoid or Gauss quadrature rules with a sufficiently large $n$-dimensional search space. An exceptional case is when storage and communication are not an issue and $n$ can be made (much) larger than the number of wanted eigenvalues~$\ell$.

The Zolotarev rule, on the other hand, makes FEAST robust in the sense that the convergence factor is independent of $G_\mathrm{eff}$, and in the following we will  discuss this property in view of the load balancing problem.
\end{remark}


\section{Load balancing over interval partitions}\label{sec:load}

In order to achieve \emph{perfect} load balancing when using FEAST in parallel over multiple search intervals  
one would need to know in advance the number of wanted eigenvalues in each search interval, and then  
distribute the available parallel resources accordingly. 
This problem requires information about the eigenvalue distribution over 
the whole spectrum of interest, and this information is often not available (though we also mention the possibility 
to use stochastic estimates; see, e.g., \cite{DiNapoli13}).
Instead we assume here that the location of the search intervals as well as an estimate for the number of eigenvalues in each interval are given. 
Our goal is then to obtain fast convergence  within approximately the 
same number of FEAST iterations on each search interval. Some problems related to dissecting the FEAST method and choosing
the subspaces appropriately have been discussed in  \cite{galgon2011feast,2013_kraemer}.

In the original FEAST publication \cite{2009_polizzi} it was suggested to use a subspace size of $\times 1.5$ 
the estimated number of eigenvalues $\ell$ inside a given search interval (i.e., $n=1.5\ell$) to obtain a fast convergence 
 using at least $8$ nodes for the Gauss quadrature rule along a semi-circle. 
While these choices for $n$ and $m$  worked well for a large number of examples,
 it is now well understood from the discussions in the previous sections that they are also far from optimal.
In particular, we note the following two limiting cases for the choice of $n$ when the number of
contour points $m$ stays fixed:
\begin{itemize}
\item If $n$ is chosen too small (but still $n\geq\ell$), $\lambda_{n+1}$ could be located very close to the edges 
of the search interval.
This situation is likely to occur, e.g., if the eigenvalues are not evenly distributed and particularly dense 
 at the edges of the interval. By Theorem~\ref{thm:subspace-iteration} the convergence is expected to be poor in particular when $|r_m(z)|$ 
does not decay quickly enough near the search interval
and this is the case with the Gauss and trapezoid quadrature 
 rules; see Figure~\ref{fig_res2}. 
The  dependence of convergence on the value $|r_m(\lambda_{n+1})|$ 
 can cause  difficulties for achieving load balancing when FEAST runs on several search intervals in parallel. 
While on some search intervals the method may converge in, say, 2 to 3 iterations 
because $|r_m(\lambda_{n+1})|\leq 10^{-5}$, other intervals could require much longer if the subspace size $n$ 
is not sufficiently large.
\item If $n$ is chosen too large,  $|r_m(\lambda_{n+1})|$  is likely to be very small and hence convergence would be
rapid (see Figure~\ref{fig_res2} in the case of $m=8$).
 However, the location of $\lambda_{n+1}$ is not known a priori and using a  very large search space leads to three major problems: (i) 
a considerable increase in computation time since $n$ represents the number of right-hand sides to solve for each shifted linear system, (ii) an increase in communication cost as $n$ vectors need to be communicated to a master processor at each iteration, and (iii) a possibly highly rank-deficient search subspace $Z_k$  which may prevent the 
reduced pencil to be constructed stably 
without explicit orthogonalization of $Z_k$. 
\end{itemize}

Figure~\ref{fig_res2} shows that the Zolotarev rational function  (using  $R=1e6$) has a much steeper slope
near the edges of the search interval than the Gauss and trapezoid rules (see, in particular, the magnified part). 
As a result, the a-priori known convergence factor for Zolotarev can often be obtained using a very small subspace size $n$ ($n\geq \ell$ with $n\simeq\ell$). Moreover, the convergence factor will stay almost constant if $n$ is increased further. In this sense the convergence of FEAST with the Zolotarev rule is predictable and robust. In fact, the a priori knowledge of the convergence factor can be useful for detecting whether the subspace size is chosen large enough: After a few FEAST iterations one calculates an \emph{approximate  convergence factor}  from the decrease of the eigenvector residuals. If the approximate convergence factor is much worse than expected from  Theorem~\ref{cor:zolo} (see also Table~\ref{tab:comp}) then $n$ should be increased. 



\begin{figure}[ht]
\begin{center}
\vspace*{5mm}
\includegraphics[width=0.60\linewidth]{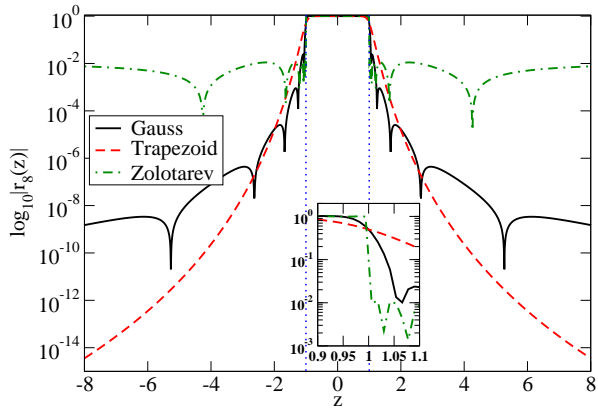}
\caption{\label{fig_res2} 
Comparison of moduli of the rational function $r_8(\lambda)$ for the Gauss ($S=\infty$), trapezoid ($S=\infty$),  
and Zolotarev ($R=1e6$) rules over  
a larger $z$-range than the one used in Figures \ref{fig:gauss}, \ref{fig:trapez}, and \ref{fig:zolotarev}.
 The magnified part focuses  on the variation of   
the rational functions near the edge $|z|=1$ of the search interval (we also recall that all these functions take the value $1/2$ at $|z|=1$). }
\end{center}
\end{figure}

In summary, the delicate choice of $n$ to achieve load balancing 
and computational efficiency is greatly simplified with the Zolotarev approach.
In practice one can achieve a uniform convergence behaviour over multiple search intervals by simply covering the region of interest by translated Zolotarev rational functions $r_m^{(Z)}(z+t)$, possibly with a small overlap. An example of three concatenated Zolotarev functions on the interval $[-3,3]$ is given in Figure~\ref{fig:load}. The expected convergence factor for all three FEAST runs will be the same (provided that $n$ is sufficiently large) and can be calculated from $G=0.98$ and $m=6$ using Theorem~\ref{cor:zolo} (in this example we  read  off from Table~\ref{tab:comp} that the expected convergence factor is $7.46e-3$).

\begin{figure}
 \centering \includegraphics[width=.75\linewidth]{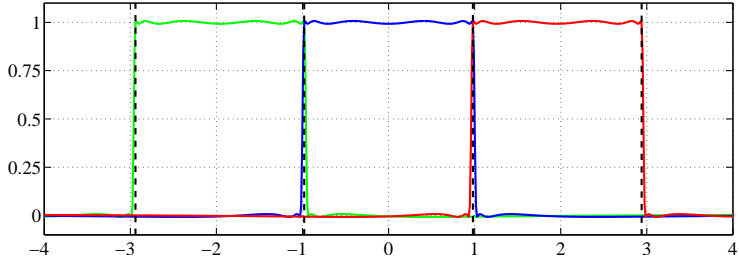}\\[-0mm]
 \caption{Translated rational functions $r_m^{(Z)}(z + 2jG)$, $j\in\{-1,0,1\}$, covering a larger interval $[-3G,3G]$. In this example  we have chosen $G=0.98$ and $m=6$.}\label{fig:load}
\end{figure}

%

\section{Numerical experiments}\label{sec:numex}

In this section we discuss three numerical experiments stemming from  electronic structure calculations and aiming to compare the robustness and efficiency of FEAST running with Gauss and trapezoid  quadrature, as well as the Zolotarev rational function, respectively.  
All results have been obtained using the sparse solver interface of   
FEAST v2.1 \cite{FEASTsolver}, which has  been modified 
for this article to integrate the Zolotarev nodes and weights for the parameter $R=1e6$ (cf.~Section~\ref{sec:zolo}).
All numerical quadratures with the Gauss and trapezoid rules have been performed along a semi-circle (i.e., $S=\infty$; cf.~Section~\ref{sec:quad}), taking advantage of the eigenvalue counting  approach introduced in FEAST v2.1 \cite{Tang13} (this approach requires the value of $r_m$ to be $1/2$ at the interval endpoints, see also Remark~\ref{rem:symm}).   
Finally,  all spectral values $\lambda$ (including the edges of the search interval) 
are implicitly stated in the physical unit of electron Volt (for consistency with the unscaled matrix data  
all numerical values should be multiplied by the electron charge $q=1.602176\times 10^{-19}$).
 



\subsection{Example I}

Let us first consider the {\em cnt}  matrix which was presented
in \cite{2009_polizzi} and can be found in the FEAST package \cite{FEASTsolver,FEASTdoc}.
This matrix stems from a 2D FEM discretization of the
DFT/Kohn--Sham equations at a cross-section of a
(13,0) Carbon nanotube (CNT) \cite{zhang08}.
The corresponding eigenproblem takes the generalized form \eqref{eq:gep} with $A$ 
real symmetric and $B$ symmetric positive definite. The size of both matrices
is $N = 12, 450$ and their sparsity patterns are identical with
a number of $nnz = 86, 808$ nonzero entries. We are looking for the $\ell=100$ eigenvalues contained 
in the search interval $[\lambda_\mathrm{min}=-65,\lambda_\mathrm{max}=4.96]$. 

Figure~\ref{cnt_38residual} shows the residual norms (more precisely, the maximum among all residual norms of all approximate eigenpairs in the search interval) at each FEAST iteration using three ($m=3$) and eight ($m=8$) integration nodes for both Gauss, trapezoid,  
and Zolotarev. We observe poor convergence with  Gauss and 
trapezoid using a subspace of small dimension $n=102$, i.e., $n=\ell+2$. As expected the convergence with Gauss and trapezoid systematically improves 
when the subspace size $n$ is increased. Zolotarev, on the other hand, converges robustly even with $n=102$ and remains to converge at the same rate when $n$ is increased further.

\begin{figure}[htbp]
\begin{center}
\includegraphics[width=0.7\linewidth]{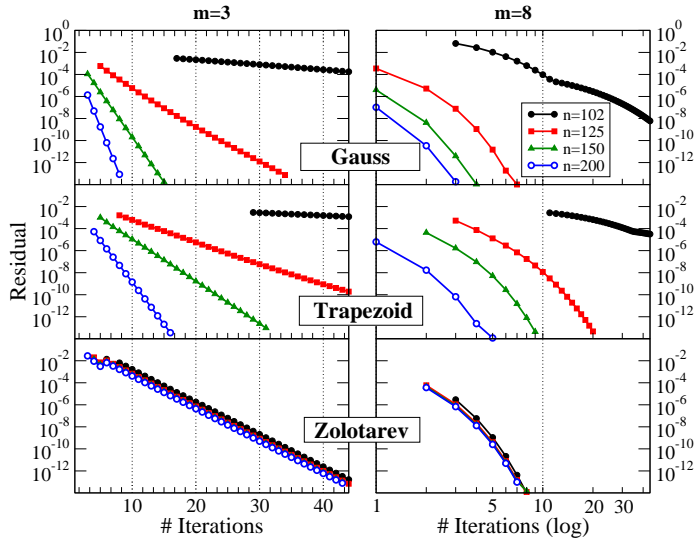}
\end{center}
\caption{\label{cnt_38residual}
FEAST residual convergence for the {\em cnt} matrix using Gauss, 
trapezoid, and Zolotarev. We have used $m=3$ (left) and $m=8$ (right) nodes while varying the subspace size $n\in\{102,125,150,200\}$. The residual norms are reported starting with the iteration where the number 
of eigenvalues in the search interval stabilizes at $100$.}
\end{figure}

A more detailed comparison between Gauss and Zolotarev for $m=8$ is provided in Figure~\ref{cnt_rational}. 
Clearly $n=102$ is insufficient for the Gauss rule to achieve a small value $|r_8^{(G)}(\lambda_{n+1})|$, 
but for larger subspace sizes this value decreases and hence Gauss converges faster. As $|r_8^{(Z)}(\lambda_{103})|$ is sufficiently small, Zolotarev attains its theoretical convergence factor of $1.12\times 10^{-2}$ (calculated using the 
results of Section \ref{sec:comp}) for $n=102$.

\begin{figure}[htbp]
\begin{center}
\includegraphics[width=0.7\linewidth]{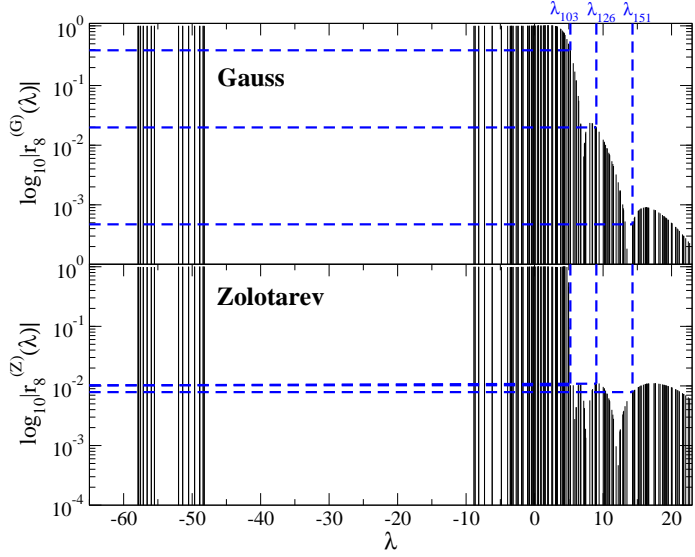}
\caption{\label{cnt_rational} Moduli of  the rational functions $r_8^{(G)}$ and $r_8^{(Z)}$ at the eigenvalues  
of the  {\em cnt} matrix in the search interval $[\lambda_\mathrm{min}=-65,\lambda_\mathrm{max}=4.96]$.
The moduli of the rational functions are given by the height of the vertical lines, and the $\lambda$-position indicates the eigenvalue. There are exactly $100$ eigenvalues 
located in the search interval. The horizontal lines provide
information about the moduli of the rational functions evaluated at $\lambda_{n+1}$ for various subspace sizes 
$n\in\{ 102,125,150\}$.}
\end{center}
\end{figure}




\subsection{Example II}

We now present a case where even a large subspace size of $n=1.5\ell$ is not enough to yield satisfactory 
FEAST convergence with Gauss quadrature. This generalized eigenproblem, {\em Caffeinep2}, is obtained
from a 3D quadratic FEM discretization of the Caffeine molecule ($\rm C_{8}H_{10}N_4O_2$),
using an all-electron DFT/Kohn--Sham/LDA model \cite{levin12,gavin13}.
The size of both matrices $A$ and $B$
is $N = 176, 622$ and their sparsity patterns are identical with
$nnz = 2,636,091$ nonzero entries.
The eigenvalues can be classified into so-called core, valence, and extended/conduction electron states. We are here searching for the first $\ell=57$ eigenvalues contained 
in the  interval $[\lambda_\mathrm{min}=-711,\lambda_\mathrm{max}=-0.19]$ covering the three physical state regions. 

Figure~\ref{Caffeinep2_rational} shows the moduli of the Gauss and Zolotarev rational functions with $m=8$. For Gauss, the choice of 
two subspace sizes $n=71$ (i.e., $n\simeq 1.25\ell$) and $n=85$ (i.e., $n\simeq 1.5\ell$) are highlighted  
with their corresponding values $|r_m^{(G)}(\lambda_{n+1})|$. 
Both values make us expect very poor convergence
factors for FEAST, and indeed after $41$ iterations the residual norms are found to have decreased only to $2.4\times 10^{-5}$ ($n=71$) and $4.8\times 10^{-6}$ ($n=85$), respectively.
For Zolotarev the figure indicate that the theoretical converge rate is already attained using a subspace size of $n=71$, in which case FEAST converges  within $9$ iterations to a residual norm of $7.8\times10^{-14}$. With $n=59$ Zolotarev-FEAST converges to about the same residual norm in $23$ iterations. 


From the results in Examples I and II we  conclude that the suggested choice of $n=1.5\ell$ for Gauss (see \cite{2009_polizzi}) 
is capable of providing good convergence rates but it  lacks  robustness.
The initial choice of a subspace size  $n=1.5\ell$  with Zolotarev will  typically be safer in practice.
Note that the subspace size can easily be truncated after the first few FEAST iterations without affecting the theoretical convergence factor.

\begin{figure}[htbp]
\begin{center}
\includegraphics[width=0.7\linewidth]{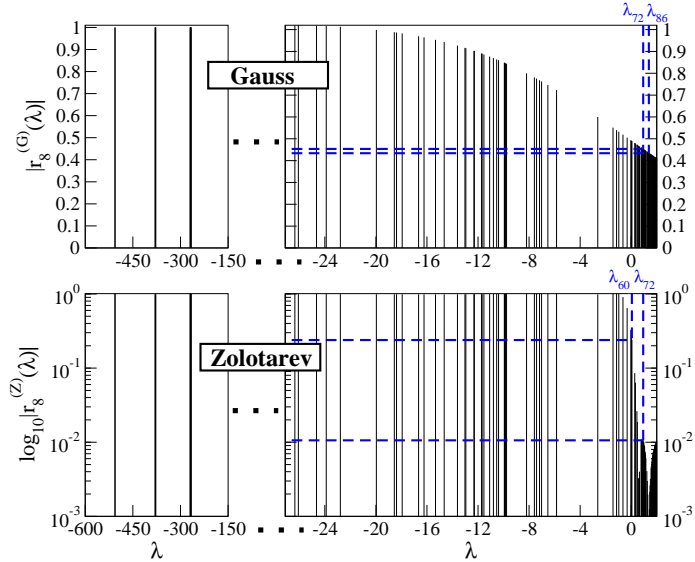}
\caption{\label{Caffeinep2_rational} Moduli of  the rational functions $r_8^{(G)}$ and $r_8^{(Z)}$ at the eigenvalues  
 of the  {\em Caffeinep2} matrix in the search interval $[\lambda_\mathrm{min}=-711,\lambda_\mathrm{max}=-0.19]$.
The moduli of the rational functions are given by the height of the vertical lines, and the $\lambda$-position indicates the eigenvalue. For visual clarity we have removed from the plots the large gap
between the core eigenvalues (part on the left) and valence/conduction eigenvalues (part on the right), where
no eigenvalues are found. The function values  for Gauss (in the top) are  plotted 
in linear scale while the ones for Zolotarev (in the bottom) are  plotted in logarithmic scale.
There are $57$ eigenvalues located in the search interval, and the horizontal lines provides
information about the moduli of the rational functions evaluated at $\lambda_{n+1}$ for various subspace sizes 
$n$, namely $n\in\{ 71, 85\}$ for Gauss and $n\in\{  59, 71\}$ for Zolotarev.}
\end{center}
\end{figure}




\subsection{Example III}

With the matrix {\em Caffeinep2} from Example II we now evaluate the efficiency of Zolotarev in terms of  
load balancing when using two search intervals. The first one, $[-711,-4]$, captures the $\ell_1=51$ core and
 valence electron states while the second one, $[-4,1.995]$, captures the first $\ell_2=55$ extended/conduction 
electron states.

Table~\ref{tab:load} reports the number of FEAST iterations needed 
to converge to a residual norm below $10^{-13}$ 
using both Gauss and Zolotarev with a subspace size of $n_j\simeq1.5\ell_j$ for the two intervals, 
i.e., $n_1=76$ and $n_2=83$. 
The results indicate that the number of FEAST iterations required on different search intervals can differ significantly using Gauss, whereas Zolotarev is capable of providing reliable load balancing. 
This is consistent with the discussions in Section~\ref{sec:load}.

\begin{table}[htbp]
\caption{\label{tab:load} Number of required FEAST iterations for the {\em Caffeinep2} example using Gauss and Zolotarev rules on 
two search intervals. Three cases $m=8$, $m=16$, and $m=32$ are considered. 
The number of eigenvalues
in the intervals is  $\ell_1=51$ and $\ell_2=55$, and the sizes of the search subspaces 
has been set to $n_1=76$ and $n_2=83$, respectively. For $m=32$, the symbol ``$~^*$'' indicates that 
 the size of the subspace $Z_k$ has been resized to a smaller dimension by FEAST v2.1~\cite{Tang13}. 
} 
\begin{center}
\begin{tabular}{l|ccc|ccc}
 \multicolumn{1}{c}{} & 
\multicolumn{3}{|c|}{Gauss} & \multicolumn{3}{c}{Zolotarev}  \\
\multicolumn{1}{c|}{Intervals} & \multicolumn{1}{c}{$m=8$} & \multicolumn{1}{c}{$m=16$} &  \multicolumn{1}{c|}{$m=32$}& $m=8$ &  $m=16$ &  \multicolumn{1}{c}{$m=32$}\\ \hline \hline
$[-711,-4]$ & $39$    &  $9$ & $5$  & $8$ & $4$ & $3^*$      \\ \hline
$[-4,1.995]$  & $5$    &  $3$ & $3^*$  & $9$ & $4$ &$2^*$      \\ 
\hline
\end{tabular}
\end{center}
\end{table}

\section*{Summary and future work} We have studied Zolotarev rational functions as filters in the FEAST eigensolver. We have quantified the expected Zolotarev convergence factor and compared it analytically and numerically with the convergence factors  obtained via trapezoid and Gauss quadrature. The Zolotarev rational functions possess a very steep slope at the interval endpoints which often allows for a decrease of the search space dimension. Moreover, these functions do not decay towards infinity which causes FEAST to converge at a predictable, and analytically known, rate (for a sufficiently large search space dimension). We discussed the implications in view of load balancing. The new Zolotarev rules will be part of the next FEAST release, version~3.

Several questions remain open for future work. First of all, some of the poles of the Zolotarev rational functions move very close to  the real line. The same is true for the mapped Gauss rule, and even the trapezoid rule when a flat ellipse is used as the contour. It is not clear what is the effect of these poles nearby the search interval on the accuracy of the linear system solver. We have observed numerically that the weights are approximately proportional to the imaginary parts of their associated poles so that, possibly, inaccuracies in the linear system solves are damped out. The numerical experiments performed did not indicate any problems with instability.

Another question is how the Zolotarev ``quadrature rules'' generalize to moments of higher order. We have numerically observed that the Zolotarev rules integrate higher-order moments quite accurately when a polynomial weight function is introduced in \eqref{eq:moments}. Also it may be beneficial to distribute the number of equioscillation points of the Zolotarev rational function differently, for example, placing more equioscillation points outside the search interval than inside. Such a rational function can easily be constructed using, e.g., the two-interval Zolotarev approach in \cite{DGK14}.

\section*{Acknowledgement} We are grateful to Anthony Austin, Daniel Kressner, Lukas Kr\"{a}mer, Bruno Lang, Yuji Nakatsukasa, and Nick Trefethen for useful discussions.

\bibliographystyle{abbrv}
\bibliography{stefanbib}

\end{document}